\theoremstyle{thmstyleone}%
\newtheorem{theorem}{Theorem}
\newtheorem{proposition}[theorem]{Proposition}%
\newtheorem{lemma}[theorem]{Lemma}%
\newtheorem{assumption}[theorem]{Assumption}%
\newtheorem{corollary}[theorem]{Corollary}
\theoremstyle{thmstylethree}%
\begin{document}
\renewcommand{\qedsymbol}{}
\title[Accelerated Proximal Gradient Method with Backtracking for MOP ]{Accelerated Proximal Gradient Method with Backtracking for Multiobjective Optimization }


\author[1]{Huang Chengzhi}\email{huangczmath@163.com}

\author*[2]{Chen jian}\email{chenjianmath@163.com}

\author[3]{Tang Liping}\email{tanglipings@163.com}

\affil[1]{\orgdiv{National Center for Applied Mathematics in Chongqing}, \orgname{ Chongqing Normal University}, \orgaddress{ \city{Chongqing}, \postcode{401331}, \country{China}}}

\affil[2]{\orgdiv{National Center for Applied Mathematics in Chongqing}, \orgname{ Chongqing Normal University}, \orgaddress{ \city{Chongqing}, \postcode{401331}, \country{China}}}

\affil[3]{\orgdiv{National Center for Applied Mathematics in Chongqing}, \orgname{ Chongqing Normal University}, \orgaddress{ \city{Chongqing}, \postcode{401331}, \country{China}}}
\equalcont{This work was supported by }


\abstract{This paper proposes a new backtracking strategy based on the FISTA accelerated algorithm for multiobjective optimization problems. The strategy focuses on solving the problem of Lipschitz constant being unknown. It allows estimate parameter updates non-increasingly. Furthermore, the proposed strategy effectively avoids the limitation in convergence proofs arising from the non-negativity of the auxiliary sequence, thus providing a theoretical guarantee for its performance. We demonstrate that, under relatively mild assumptions, the algorithm achieves the convergence rate of $O(1/k2)$.}

\keywords{backtracking strategy, accelerated algorithm, multiobjective optimization, convergence rate}


\pacs[Mathematics Subject Classification]{XXXXX}

\maketitle

\section{Introduction}\label{sec1}
In practical application scenarios, it is common to encounter problems that involve optimizing multiple objective functions simultaneously. These problems are generally referred to as multiobjective optimization problems, which typically do not have a single optimal value but rather a solution set composed of Pareto-optimal solutions. Consequently, solving such problems presents numerous challenges.

In this paper, we focus on the following so-called composite unconstrained multiobjective optimization problem:
\begin{equation}\label{mop}
	(MOP) \quad \min_{x \in \mathbb{R}^n} F(x) \equiv (f_1(x)+g_1(x), \dotsb, f_m(x)+g_m(x))^T,
\end{equation}
where $f_i :\mathbb{R}^n \to \mathbb{R}$ and $g : \mathbb{R}^n \to \mathbb{R},\forall i = 1,\dotsb,m$ are both convex functions and only $f_i$ is required to be smooth. Many practical problems, such as image restoration and compressed sensing, can be transformed into this form.

Now, the algorithms solving multiobjective problems conclude the scalarization, evolution, and gradient methods. The scalarization method suffered from the selection of weighted parameters, and the evolution method lacked the theory of its convergence. So, researchers turned to focus on the rest. Many algorithms were proposed, such as the proximal point\cite{bonnel2005proximal}, trust region\cite{carrizo2016trust}, steepest descent\cite{fliege2000steepest}, Newton\cite{fliege2009newton},  projected gradient\cite{fukuda2013inexact}, and conjugate gradient methods\cite{lucambio2018nonlinear} and proximal gradient. The proximal gradient method is useful for solving the composite multiobjective problem but has a slow convergence rate. It caused researchers to combine it with accelerated tricks to propose fast convergence rate proximal gradient methods.

Building on the accelerated method with $O(1/k^2)$ convergence rate proposed by Nesterov in \cite{nesterov1983accelerated}, numerous accelerated algorithms have been developed for solving single-objective optimization problems  (e.g., \cite{beck2009fista}, \cite{nesterov2013introductory}, \cite{nesterov2005smooth}, \cite{tseng2008acclerated}). In particular, The FISTA algorithm, introduced by Beck et al., achieves the same convergence rate for the case when $m = 1$ in problem (\ref{mop}).

 Inspired by the success of the FISTA algorithm for single-objective case \cite{beck2009fista}, Tanabe et al. extended it to the multiobjective case (mFISTA) \cite{tanabe2023accelerated}. Like FISTA, mFISTA enjoys the convergence rate $O(1/k^2)$ and computes an $\varepsilon$- solution in $O(\sqrt{L(f)/\varepsilon})$ steps, where $L(f)$ is a bound on the Lipschitz constant for $\nabla f(x)$. 

However, mFISTA requires the pre-determined constant $\ell > L(f)$, and in practical applications, the Lipschitz constant of the objective function may not be available, making it challenging to determine $\ell$. Even when using line search techniques, like those in \cite{beck2009fista}, to estimate $L(f)$, there is no guarantee of convergence because the auxiliary term $\sigma(k)$ used in \cite{tanabe2023accelerated} does not have non-negativity due to the nature of multiobjective problems. Besides, this strategy makes the Lipschitz constant's estimated value very big after some iteration, which can substantially limit the performance of FISTA since this causes the sizes of the steps taken at that point, and at all subsequent iterates, to be very small. As a result, the scaling technique used by Beck in the proof cannot be applied here. This drawback led us to reconsider whether we could reconstruct the sequences in the algorithm to bypass the difficulties caused by the lack of non-negativity in the auxiliary term, thus enabling a convergence proof. 

Fortunately, Scheinberg K et.al.\cite{scheiberg2014fast} proposed a new $\{t_k\}$ to overcome this failure. They introduced a  parameter $\theta_{k} = \mu_k / \mu_{k+1}$, where $\mu_k$ satisfied $\mu_{k} < 1/L(f)$, to define a sequence $t_{k+1} = (1 + \sqrt{ 1 + 4 \theta_k t_k^2})/{2}$ and construct an equation-like relationship between $t_k$ and $L(f)$. Then, they used a constant $\mu_{k+1}^0$ to control the size of $\mu_{k+1}$, which allowed $\mu_{k+1}$ non-increasing and avoided the small size of the step taken at the iteration point. Inspired by this, we use an iterative format like $\{t_k\}$ in \cite{scheiberg2014fast}, combined with the auxiliary item $\sigma(k)$ in \cite{tanabe2023accelerated} and the merit function $u_0(x)$ to prove the convergence rate of the algorithm. Moreover we use the same method as \cite{scheiberg2014fast} to estimate $L(f)$ to solve the iteration step size problem.Our contribution in this paper is as follows.
\begin{itemize}
	\item By referring to sequence $\{t_k\}$ in \cite{scheiberg2014fast}, we generalize it to the case of multiobjective optimization, and give its equality relationship with $\{L_k\}$. Combined with the auxiliary sequence and evaluation function proposed in \cite{tanabe2023accelerated}, we give the convergence proof of the algorithm and analyze its convergence rate.
	
	\item In addition, we also give a new line search criterion for multiobjective optimization, and on this basis, we propose a backtracking technique for estimating the Lipschitz constant of the objective functions in multiobjective optimization, and solve the problem that the step size of FISTA algorithm is too small due to too large value of Lipschitz constant in multiobjective optimization.
\end{itemize}

The remainder of this paper is organized as follows. Section \ref{Preliminary Results} presents some notations, definitions
and auxiliary results which will be used in the sequel. Section \ref{Algorithm} introduces the subproblem of our algorithm and proposes the algorithm. Section \ref{Convergence Rate} build some properties of $\{t_k\}$ and $\{W_k\}$ obtain the convergence results of algorithm. Section \ref{Numerical experiment} is devoted to the
numerical experiments. Finally, Section \ref{Conclusion} draws the conclusions of this paper.

\section{Preliminary Results}\label{Preliminary Results}
In this paper, for any $n \in \mathbb{N}$, $\mathbb{R}^{n}$ denotes the $n$-dimensional Euclidean space. And  $\mathbb{R}^{n}_{+}:= \{ v \in \mathbb{R}^{n} \mid v_{i} \geq 0, i=1,2,\dotsb,n \} \subseteq \mathbb{R}^{n}$ signify the non-negative orthant of $\mathbb{R}^{n}$. Besides, $\Delta^{n} := \{ \lambda \in \mathbb{R}^{n}_{+} \mid \lambda_{i} \geq 0, \sum_{i=1}^{n} \lambda_{i} = 1 \}$ represents the standard simplex in $\mathbb{R}^{n}$. Unless otherwise specified, all inner products in this article are taken in the Euclidean space. Subsequently, the partial orders induced by $\mathbb{R}^{n}_{+}$ are considered, where for any $v^{1}, v^{2} \in \mathbb{R}^{n}$, $v^{1} \leq v^{2}$ (alternatively, $v^{1} \geq v^{2}$) holds if $v^{2} - v^{1} \in \mathbb{R}^{n}_{+}$, and $v^{1} < v^{2}$ (alternatively, $v^{1} > v^{2}$) if $v^{2} - v^{1} \in \text{int} \, \mathbb{R}^{n}_{+}$. In case of misunderstand, we define the order $\preceq(\prec)$ in $\mathbb{R}^{n}$ as $$u\preceq(\prec)v~\Leftrightarrow~v-u\in\mathbb{R}^{n}_{+}(\mathbb{R}^{n}_{++}).$$
Instead, $u \npreceq v$.

From the so-called descent lemma [Proposition A.24 \cite{bertsekas1999nonlinear}], we
have $f_{i}(p)-f_{i}(q)\leq\langle\nabla f_{i}(q),p-q\rangle+(L/2)\|p-q\|^{2}$ for all $p,q\in\mathbb{R}^{n}$ and $i=1,\ldots,m$, which gives

\begin{equation}\label{descent lemma}
	\begin{aligned}F_{i}(p)-F_{i}(r)&=f_{i}(p)-f_{i}(q)+g_{i}(p)+f_{i}(q)-F_{i}(r)\\&\leq\langle\nabla f_{i}(q),p-q\rangle+g_{i}(p)+f_{i}(q)-F_{i}(r)+\frac{L}{2}\left\|p-q\right\|^{2}
	\end{aligned}
\end{equation}
for all $p,q,r\in\mathbb{R}^{n}$ and $i=1,\ldots,m.$

To construct the proximal gradient algorithm, we first introduce some basic definitions for the upcoming discussion. For a closed, proper, and convex function $ h: \mathbb{R}^n \to \mathbb{R} \cup \{ \infty \} $, the Moreau envelope of $ h $ is defined as

$$
\mathcal{M}_h(x) := \min_{y \in \mathbb{R}^n} \left\{ h(y) + \frac{1}{2} \left\| x - y \right\|^2 \right\}.
$$

The unique solution to this problem is called the proximal operator of $ h $, denoted as

$$
\operatorname{prox}_h(x) := \arg\min_{y \in \mathbb{R}^n} \left\{ h(y) + \frac{1}{2} \left\| x - y \right\|^2 \right\}.
$$

Next, we introduce a property between the Moreau envelope and proximal operation by following the lemma.
\begin{lemma}[\cite{rockafellar1997convex}]
	If  {$h$} is a proper closed and convex function, the Moreau envelope
	$\mathcal{M}_{h}$ is Lipschitz continuous and takes the following form,
	$$\nabla \mathcal{M}_{h}(x) := x- prox_{h}(x).$$
\end{lemma}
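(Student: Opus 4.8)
The plan is to prove the statement in three linked stages: that $\operatorname{prox}_h$ is well defined and single-valued, that $\mathcal{M}_h$ is differentiable with gradient $x - \operatorname{prox}_h(x)$, and that this gradient is Lipschitz (so the envelope is of class $C^{1,1}$, which is what the formula in fact asserts). First I would record the governing optimality condition. For fixed $x$ the inner objective $y \mapsto h(y) + \tfrac{1}{2}\|x-y\|^2$ is proper, closed, and $1$-strongly convex, hence admits a unique minimizer $p := \operatorname{prox}_h(x)$. Writing the first-order condition $0 \in \partial h(p) + (p-x)$ yields the inclusion
$$x - p \in \partial h(p),$$
which is the workhorse for everything that follows.

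Next I would establish that $\operatorname{prox}_h$ is firmly nonexpansive, since this gives both its continuity (needed for the differentiability step) and the Lipschitz bound on the gradient. For two points $x_1, x_2$ with $p_j := \operatorname{prox}_h(x_j)$, the inclusions $x_j - p_j \in \partial h(p_j)$ together with monotonicity of $\partial h$ give
$$\langle (x_1 - p_1) - (x_2 - p_2),\, p_1 - p_2 \rangle \ge 0,$$
which rearranges to $\langle x_1 - x_2,\, p_1 - p_2 \rangle \ge \|p_1 - p_2\|^2$; in particular $\|p_1 - p_2\| \le \|x_1 - x_2\|$, so $\operatorname{prox}_h$ is Lipschitz continuous.

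The core of the argument is differentiability, obtained by a squeeze. Using $p = \operatorname{prox}_h(x)$ as a (generally suboptimal) candidate in the definition of $\mathcal{M}_h(z)$ and expanding $\tfrac{1}{2}\|z-p\|^2 - \tfrac{1}{2}\|x-p\|^2$ gives the upper bound
$$\mathcal{M}_h(z) - \mathcal{M}_h(x) \le \langle x - p,\, z - x \rangle + \tfrac{1}{2}\|z - x\|^2.$$
Symmetrically, using $q := \operatorname{prox}_h(z)$ as a candidate for $\mathcal{M}_h(x)$ produces
$$\mathcal{M}_h(z) - \mathcal{M}_h(x) \ge \langle z - q,\, z - x \rangle - \tfrac{1}{2}\|z - x\|^2.$$
Since $q \to p$ as $z \to x$ by continuity of $\operatorname{prox}_h$, and $\|q - p\| \le \|z - x\|$, both bounds equal $\langle x - p,\, z - x \rangle + o(\|z - x\|)$; the squeeze then shows $\mathcal{M}_h$ is differentiable at $x$ with $\nabla \mathcal{M}_h(x) = x - \operatorname{prox}_h(x)$.

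Finally, Lipschitz continuity of $\nabla \mathcal{M}_h$ follows directly from firm nonexpansiveness: expanding $\|(x_1 - x_2) - (p_1 - p_2)\|^2$ and invoking $\langle x_1 - x_2, p_1 - p_2 \rangle \ge \|p_1 - p_2\|^2$ collapses it to at most $\|x_1 - x_2\|^2 - \|p_1 - p_2\|^2 \le \|x_1 - x_2\|^2$, so $\nabla \mathcal{M}_h$ is in fact $1$-Lipschitz. I expect the main obstacle to be the differentiability step: the upper bound is immediate from suboptimality, but matching it with the lower bound requires continuity of $\operatorname{prox}_h$, so the logical order matters — the monotonicity estimate must be in hand before the squeeze can be closed.
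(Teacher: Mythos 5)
Your proposal is correct and complete, but note that the paper itself contains no proof of this lemma at all: it is stated as a quotation from Rockafellar's \emph{Convex Analysis} (\cite{rockafellar1997convex}), so there is no in-paper argument to compare against, and what you have written is the standard self-contained textbook proof. All four of your steps check out: $1$-strong convexity of $y \mapsto h(y)+\tfrac12\|x-y\|^2$ gives existence and uniqueness of $p=\operatorname{prox}_h(x)$ together with the inclusion $x-p\in\partial h(p)$; monotonicity of $\partial h$ applied to the two inclusions gives the firm-nonexpansiveness inequality $\langle x_1-x_2,\,p_1-p_2\rangle\ge\|p_1-p_2\|^2$; the two suboptimality bounds do squeeze to $\langle x-p,\,z-x\rangle+O(\|z-x\|^2)$, which proves differentiability with $\nabla\mathcal{M}_h(x)=x-\operatorname{prox}_h(x)$; and expanding $\|(x_1-x_2)-(p_1-p_2)\|^2$ and invoking firm nonexpansiveness gives the $1$-Lipschitz bound on the gradient. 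Two remarks. First, you silently (and rightly) correct the lemma as stated: read literally, ``$\mathcal{M}_h$ is Lipschitz continuous'' is false in general --- for $h$ the indicator of $\{0\}$ one has $\mathcal{M}_h(x)=\tfrac12\|x\|^2$, which is not globally Lipschitz --- and the true claim, which your argument establishes, is that $\mathcal{M}_h$ is differentiable with ($1$-)Lipschitz gradient, i.e.\ of class $C^{1,1}$. Second, a cosmetic simplification: in the squeeze you appeal to ``continuity of $\operatorname{prox}_h$'' and $q\to p$, but the quantitative bound $\|q-p\|\le\|z-x\|$ already absorbs the cross term $\langle p-q,\,z-x\rangle$ into $O(\|z-x\|^2)$, so no separate limiting statement is needed; the logical ordering you worried about (nonexpansiveness before the squeeze) is indeed necessary, but only through this inequality, not through continuity per se.
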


The following assumption is made throughout the paper:

\begin{assumption}
	$f:\mathbb{R}^n \to \mathbb{R}$ is continuously differentiable with Lipschitz continuous gradient $L(f)$:
	\begin{equation}\label{assump2.1}
		\parallel \nabla f(x) - \nabla f(y) \parallel \leq L(f) \parallel x - y \parallel, \quad \forall x,y \in \mathbb{R}^n,
	\end{equation}
	where $\parallel \cdot \parallel$ stands for standard Euclidean norm unless specified otherwise.
\end{assumption}

We now revisit the optimality criteria for the multiobjective optimization problem denoted as (\ref{mop}). An element $x^{*} \in \mathbb{R}^n$ is deemed weakly Pareto optimal if there does not exist $x \in \mathbb{R}^{n}$ such that $F(x) < F(x^{*})$, where $F: \mathbb{R}^{n} \to \mathbb{R}^{m}$ represents the vector-valued objective function, The ensemble of weakly Pareto optimal solutions is denoted as $X^{*}$. The merit function $u_{0}: \mathbb{R}^{n} \to \mathbb{R} \cup \{\infty\}$, as introduced in \cite{tanabe2023new}, is expressed in the following manner:

\begin{equation}\label{3}
	u_0(x) := \sup_{z \in \mathbb{R}^{n}} \min_{i 	=1,\dotsb,m}[F_{i}(x) - F_{i}(z)].
\end{equation}

The following lemma proves that $u_0$ is a merit function in the Pareto sense.

\begin{lemma}[\cite{tanabe2023accelerated}]
	Let $u_0$ be given as (\ref{3}), then $u_0(x) \geq 0, x \in \mathbb{R}^{n}$, and $x$ is the
	weakly Pareto optimal for (\ref{mop}) if and only if $u_0(x) = 0$.
\end{lemma}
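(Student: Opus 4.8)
The plan is to treat the two assertions separately: first the non-negativity $u_0(x)\ge 0$, and then the characterization of weak Pareto optimality, using only the definition of $u_0$ in (\ref{3}) together with the negation of the weak Pareto optimality condition. No attainment of the supremum or compactness will be needed.

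For the non-negativity, I would simply test the supremum at the particular point $z=x$. Since $\min_{i}[F_i(x)-F_i(x)]=\min_i 0=0$, and $u_0(x)$ is a supremum over all $z\in\mathbb{R}^n$, we get $u_0(x)\ge 0$ at once.

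For the equivalence, the key step is to rewrite the definition of weak Pareto optimality in terms of the inner minimum. By definition, $x$ is weakly Pareto optimal precisely when there is no $z$ with $F_i(z)<F_i(x)$ for every $i$; equivalently, for every $z\in\mathbb{R}^n$ there exists an index $i$ with $F_i(x)-F_i(z)\le 0$, which is exactly the statement $\min_i[F_i(x)-F_i(z)]\le 0$ for all $z$. Taking the supremum over $z$, this is equivalent to $u_0(x)\le 0$, and combined with the bound $u_0(x)\ge 0$ from the previous paragraph, weak Pareto optimality forces $u_0(x)=0$. For the converse I would argue by contraposition: if $x$ is not weakly Pareto optimal, there is a fixed $z_0$ with $F_i(x)-F_i(z_0)>0$ for all $i$, whence $\min_i[F_i(x)-F_i(z_0)]>0$ and therefore $u_0(x)\ge \min_i[F_i(x)-F_i(z_0)]>0$; thus $u_0(x)=0$ can only hold at weakly Pareto optimal points.

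Since every step reduces to an elementary manipulation of finite minima and to a supremum bounded below by a single test point, I do not anticipate a serious technical obstacle. The only point requiring care is that the supremum in (\ref{3}) need not be attained, so the converse direction must be phrased as the lower bound $u_0(x)\ge\min_i[F_i(x)-F_i(z_0)]$ at a single witness $z_0$ rather than by invoking an actual maximizer.
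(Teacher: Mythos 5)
Your proposal is correct. The paper itself gives no proof of this lemma---it is simply quoted from the cited reference of Tanabe, Fukuda, and Yamashita---and your argument is precisely the standard one: testing the supremum at $z=x$ gives $u_0(x)\ge 0$, and negating the definition of weak Pareto optimality converts ``no $z$ with $F(z)<F(x)$'' into ``$\min_i[F_i(x)-F_i(z)]\le 0$ for all $z$,'' which yields both directions of the equivalence (the converse by exhibiting a single witness $z_0$). Your remark that no attainment of the supremum is needed---only the lower bound at one test point---is exactly the right point of care, since $u_0$ takes values in $\mathbb{R}\cup\{\infty\}$ and the supremum need not be achieved.
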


\section{ An accelerated proximal gradient method for multiobjective optimization}\label{Algorithm}
This section presents an accelerated version of the proximal gradient method. Similar to \cite{tanabe2023accelerated}, we start from (\ref{descent lemma}) and give the subproblem required for each iteration: for a given $x \in dom {F}$, $y \in \mathbb{R}^n$:

\begin{equation}\label{subproblem}
	\min_{z \in \mathbb{R}^n} \varphi_{L(f)} (z; x, y),
\end{equation}
where
$$
\varphi_{L(f)} (z; x, y) := \max_{i=1,\dotsb,n} [ \langle 	\nabla f_i(y), z - y\rangle + g_i(z) + f_i(y) - F_i(x)] + \frac{L(f)}{2} \parallel z - y \parallel^2.
$$

Since $g_i$ is convex for all $i = 1, \dotsb, m$, $z \mapsto \varphi_{L(f)} (z; x, y)$ is strong convex. Thus, the subproblem (\ref{subproblem}) has a unique optimal solution $p_{L(f)}(x, y)$ and takes the optimal function value $\theta_{L(f)} (x, y)$, i.e.,

\begin{equation}\label{pl and theta}
	\begin{aligned}
		p_{L(f)}(x, y) &:= {\arg\min}_{z \in \mathbb{R}^n} 	\varphi_{L(f)}(z; x, y) \\
		\theta_{L(f)}(x, y) &:= {\min}_{z \in \mathbb{R}^n} \varphi_{L(f)}(z; x, y). 	
	\end{aligned}
\end{equation}

Moreover, the optimality condition of (\ref{subproblem}) implies that for all $x \in dom {F}$ and $y \in \mathbb{R}^n$ there exists $\tilde{g}(x, y) \in \partial g(p_{L(f)}(x, y))$ and a Lagrange multiplier $\lambda(x, y) \in \mathbb{R}^m$ such that
\begin{equation}\label{optimal condition a}
	\sum_{i=1}^{m} \lambda_i(x, y) [ \nabla f_i(y) + 	\tilde{g}_i(x, y)] = - L(f) [p_{L(f)}(x, y) - y]
\end{equation}

\begin{equation}\label{optimal condition b}
	\lambda(x, y) \in \Delta^m, \quad \lambda_j (x, y) = 0 	\quad for \ all \ j \notin \mathcal{I}(x, y),
\end{equation}
where $\Delta^m$ denotes the standard simplex and
\begin{equation}
	\mathcal{I}(x, y) := {\arg\min}_{i = 1, \dotsb, m} [ 	\langle \nabla f_i(x), p_{L(f)}(x, y) - y\rangle + g_i(p_{L(f)} (x, y)) + f_i(y) - F_i(x)].
\end{equation}

We note that by taking $z = y$ in the objective function of (\ref{subproblem}), we get
\begin{equation}
	\theta_{L(f)}  (x, y) \leq \varphi_{L(f)} (y; x, y) = 	\max_{i=1,\dotsb,m} \{ F_i(y) - F_i(x)\}
\end{equation}
for all $x \in dom {F}$ and $y \in \mathbb{R}^n$. Moreover, from (8) with $p = z, q = y, r = x$, it follows that
$$
\theta_{L(f)} (x, y) \geq \max_{i=1,\dotsb,m} \{ F_i(p_{L(f)}(x, y)) - F_i(x)\}
$$
for all $x \in dom {F}$ and $y \in \mathbb{R}^n$. Now we use the same statement of [Proposition 4.1, \cite{tanabe2023accelerated}] to characterize weak Pareto optimality in terms of the mappings $p_{L(f)}$ and $\theta_{L(f)}$.

\begin{proposition}\label{prop for pl}
	Let $p_{L(f)}(x, y)$ and $\theta_{L(f)}(x, y)$ be defined by (\ref{pl and theta}). Then, the following three conditions are equivalent:
	
	(i) $y \in \mathbb{R}^n$ is weakly Pareto optimal for (\ref{mop});
	
	(ii) $p_{L(f)} (x, y) = y$ for some $x \in \mathbb{R}^n$;
	
	(iii) $\theta_{L(f)} (x, y) = \max_{i=1,\dotsb,m} [F_i(y) - F_i(x)]$ for some $x \in \mathbb{R}$.
\end{proposition}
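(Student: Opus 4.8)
The plan is to establish the three equivalences through the cycle $(i)\Rightarrow(ii)\Rightarrow(iii)\Rightarrow(i)$, following the template of [Proposition 4.1, \cite{tanabe2023accelerated}]. The backbone is the observation recorded just above the statement, namely that $\varphi_{L(f)}(y;x,y)=\max_{i=1,\dots,m}[F_{i}(y)-F_{i}(x)]$, combined with the strong convexity of $z\mapsto\varphi_{L(f)}(z;x,y)$, which guarantees that the minimizer $p_{L(f)}(x,y)$ is unique and attains the value $\theta_{L(f)}(x,y)$. With this in hand, two of the links are almost immediate, so the genuine content lies in tying weak Pareto optimality to the fixed-point property $p_{L(f)}(x,y)=y$.

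First I would dispatch the analytic link $(ii)\Leftrightarrow(iii)$. If $p_{L(f)}(x,y)=y$, then $\theta_{L(f)}(x,y)=\varphi_{L(f)}(p_{L(f)}(x,y);x,y)=\varphi_{L(f)}(y;x,y)=\max_{i}[F_{i}(y)-F_{i}(x)]$, which is exactly $(iii)$. Conversely, if $(iii)$ holds, then $y$ attains the minimal value $\theta_{L(f)}(x,y)$ of $\varphi_{L(f)}(\cdot;x,y)$; since the minimizer is unique by strong convexity, $y=p_{L(f)}(x,y)$, giving $(ii)$. In particular $(iii)\Rightarrow(ii)$, which I will reuse below.

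For $(i)\Rightarrow(ii)$ I would take $x=y$ and argue by contradiction, assuming $p:=p_{L(f)}(y,y)\neq y$. Choosing $z=y$ shows $\theta_{L(f)}(y,y)\leq\max_{i}[F_{i}(y)-F_{i}(y)]=0$, so from $F_{i}(y)=f_{i}(y)+g_{i}(y)$ and the strictly positive term $\tfrac{L(f)}{2}\|p-y\|^{2}>0$ one obtains $\max_{i}\bigl[\langle\nabla f_{i}(y),p-y\rangle+g_{i}(p)-g_{i}(y)\bigr]<0$, hence $\langle\nabla f_{i}(y),p-y\rangle+g_{i}(p)-g_{i}(y)<0$ for \emph{every} $i$. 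Thus $p-y$ is a common strict descent direction: using differentiability of $f_{i}$ and convexity of $g_{i}$ to bound the directional derivative $F_{i}'(y;p-y)\leq\langle\nabla f_{i}(y),p-y\rangle+g_{i}(p)-g_{i}(y)<0$, I find a small $t>0$ with $F_{i}(y+t(p-y))<F_{i}(y)$ for all $i$ simultaneously, contradicting weak Pareto optimality. For the remaining link $(iii)\Rightarrow(i)$ I would first convert $(iii)$ into $(ii)$ and then run the contrapositive: if $y$ is not weakly Pareto optimal, pick $\bar{x}$ with $F(\bar{x})<F(y)$, set $\delta:=\min_{i}[F_{i}(y)-F_{i}(\bar{x})]>0$, and test the minimizer against $z=y+t(\bar{x}-y)$; convexity of each $f_{i}$ and $g_{i}$ yields $\varphi_{L(f)}(z;x,y)\leq\max_{i}[F_{i}(y)-F_{i}(x)]-t\delta+\tfrac{L(f)}{2}t^{2}\|\bar{x}-y\|^{2}$, which drops strictly below $\varphi_{L(f)}(y;x,y)=\theta_{L(f)}(x,y)$ for small $t$, contradicting minimality and hence forcing $p_{L(f)}(x,y)\neq y$.

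The main obstacle will be the two perturbation estimates above: one must check that the first-order term, linear and negative in $t$, strictly dominates the nonnegative quadratic remainder for all sufficiently small $t>0$, and that a \emph{single} step $t$ can be chosen to work uniformly over the finite index set $i=1,\dots,m$, so that strict decrease occurs in every component at once. Care is also needed to invoke convexity of $f_{i}$ in the correct direction when passing between $\langle\nabla f_{i}(y),\cdot\rangle$ and differences of $f_{i}$. As an alternative route for $(ii)\Rightarrow(i)$, one could instead invoke the optimality conditions (\ref{optimal condition a})--(\ref{optimal condition b}): setting $p_{L(f)}(x,y)=y$ there gives $\sum_{i}\lambda_{i}[\nabla f_{i}(y)+\tilde{g}_{i}]=0$ with $\lambda\in\Delta^{m}$, i.e. $0\in\partial\bigl(\sum_{i}\lambda_{i}F_{i}\bigr)(y)$, so $y$ minimizes the convex scalarization $\sum_{i}\lambda_{i}F_{i}$, which immediately yields weak Pareto optimality.
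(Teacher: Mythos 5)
Your proof is correct and takes essentially the approach the paper relies on: the paper omits the argument entirely, deferring to [Proposition 4.1, \cite{tanabe2023accelerated}], and your cycle of implications --- uniqueness of the minimizer of the strongly convex $z \mapsto \varphi_{L(f)}(z;x,y)$ for (ii)$\Leftrightarrow$(iii), the common strict descent direction $p_{L(f)}(y,y)-y$ for (i)$\Rightarrow$(ii), and the convex perturbation $z = y + t(\bar{x}-y)$ for the contrapositive of (ii)$\Rightarrow$(i) --- is precisely the argument of that cited proposition. Your alternative scalarization route via the optimality conditions (\ref{optimal condition a})--(\ref{optimal condition b}) is also sound.
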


\begin{proof}
	The proof is similar to [Proposition 4.1, \cite{tanabe2023accelerated}], so we omit it.
\end{proof}

Proposition \ref{prop for pl} suggests that we can use $\parallel p_{L(f)}(x, y) - y \parallel_{\infty} < \varepsilon$ for some $\varepsilon > 0$ as a stopping criterion. For convenience, we will denote the following iterative format as FISTA-Step:
\begin{equation}\label{fista step}
	\begin{cases}
		t_{k+1} &:= \frac{1 + \sqrt{ 1 + 4 \omega_k t_k^2}}{2} \\
		\theta_{k+1} &:= \frac{t_k - 1}{t_{k+1}} \\
		y_{k+1} &:= x_k + \theta_{k+1} [x_k - x_{k-1}].
	\end{cases}
\end{equation}
where $\omega_k := L_{k+1}/ L_{k}$ for all $k \geq 1$, $L_k$ is the estimated value of $L_{f}$ at each iteration.

Now, we state below the proposed algorithm.
\begin{algorithm}[H]
	\renewcommand{\algorithmicrequire}{\textbf{Input:}}
	\renewcommand{\algorithmicensure}{\textbf{Output:}}
	\caption{FISTA for multiobjective optimization with new line search technique}
	\label{alg1}
	\begin{algorithmic}[1]
		\Require {Set $x_{-1} = x_0 = y_0 \in \mathbb{R}^n, L_1 > 0, \beta > 1, \sigma > 1, \epsilon >0, t_{-1} = 0, \omega_{-1} = 1, k \leftarrow 0,t_0 \leftarrow 1$ and a vector of dimension $m$ with all components equal to one: $e_m$.}
		\For{$k = 1, \dotsb, K$}
		\State{ Set $L_k \leftarrow L_{k-1} / \sigma$.}
		\State Compute $\nabla f_i(y_k), \forall i = 1,\dotsb,m$, and compute $\hat{x}_k = p_{L_k}(x_k, y_k)$.
		\If{$F(\hat{x}_k) \npreceq  \langle J F(y_k),  \hat{x}_k - y_k\rangle 
			+ F(y_k) 
			+  \frac{L_k}{2} \parallel \hat{x}_k - y_k \parallel^2 e_m$}
		\State $L_k \leftarrow \beta L_k, \omega_{k-1} \leftarrow \beta\omega_{k-1} $
		\State $(t_k, y_k) \leftarrow \text{FISTA-Step}(x_{k-1}, x_{k-2}, t_{k-1}, \omega_{k-1})$, return to Step 3.
		\Else
		\State $x_{k+1} \leftarrow \hat{x}_k$.
		\EndIf
		\If{$\parallel p_{L(f)}(x_k, y_k) - y_k \parallel_{\infty} < \varepsilon$}
		\Return {Weak Pareto solution $x^*$} 
		\EndIf
		\EndFor
	\end{algorithmic}
\end{algorithm}
Algorithm \ref{alg1} is known to generate $x_k$ such that $u_0(x_k)$ converges to zero with rate $O(1/k^2)$ under the following assumption.

\begin{assumption}\label{a1}
	Suppose $ X^{*} $ is set of the weakly Pareto optimal points and $\mathcal{L}_{F}(c) := \{x \in \mathbb{R}^{n} F(x) \leq c\}$, then for any $x \in \mathcal{L}_{F}(F(x_0)), k\geq 0$, then there exists
	$x \in X^{*}$such that $F(x^{*}) \leq F(x)$ 
	$$
	\begin{aligned}
		R := \sup_{F^{*} \in F(X^{*} \cap \mathcal{L}_{F}( F(x_0))} &\inf_{z \in F^{-1}(\{F^{*}\})}
		\parallel x_0 - z \parallel^2 < \infty.
	\end{aligned}
	$$
\end{assumption}

Now we give some properties of  $\{t_k\}$ and $\{\theta_k\}$ to facilitate subsequent convergence proofs.

\begin{theorem}\label{th of tk}
	Let $\{t_k\}$ be defined by (\ref{fista step}). Then, the following properties hold for all $k \geq 1$:
	
	(i) $\frac{1}{2} + \sqrt{\omega_{k}}t_k \leq t_{k+1} \leq \left( 1 + \sqrt{\omega_{k}} \right) t_k$;
	
	(ii) $L_{k+1}^{-1} t_{k+1} (t_{k+1} - 1) = L_k^{-1} t_k^2$;
	
	
	
	(iii) For any $k \in \mathbb{N}$, $\theta_{k+1}^2 \leq \frac{L_k}{L_{k+1}} < 1$;
\end{theorem}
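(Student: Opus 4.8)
The plan is to handle the three items in the order (i), (ii), (iii), since the estimates in (i) drive the other two. The one fact I would establish first is that $t_k \ge 1$ for every $k \ge 0$: since $t_0 = 1$ and $\omega_k t_k^2 \ge 0$, the recursion $t_{k+1} = (1 + \sqrt{1 + 4\omega_k t_k^2})/2 \ge (1+1)/2 = 1$ gives it immediately, and this bound is what justifies every squaring step below.

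For (i) I would isolate the square root and square. The lower bound $\tfrac12 + \sqrt{\omega_k}\,t_k \le t_{k+1}$ reduces, after subtracting $\tfrac12$ and doubling, to $2\sqrt{\omega_k}\,t_k \le \sqrt{1 + 4\omega_k t_k^2}$, which upon squaring is merely $0 \le 1$. The upper bound $t_{k+1} \le (1 + \sqrt{\omega_k})t_k$ is the more delicate one: I would rewrite it as $\sqrt{1 + 4\omega_k t_k^2} \le 2(1+\sqrt{\omega_k})t_k - 1$, note that the right-hand side is nonnegative because $t_k \ge 1$, then square and cancel the common term $4\omega_k t_k^2$; the residual inequality collapses to $1 + \sqrt{\omega_k} \le (1 + 2\sqrt{\omega_k})t_k$, which again holds because $t_k \ge 1$. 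Thus both bounds in (i) are pure algebra once $t_k \ge 1$ is available.

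For (ii) I would observe that, by construction, $t_{k+1}$ is the positive root of $t^2 - t - \omega_k t_k^2 = 0$, so that $t_{k+1}(t_{k+1} - 1) = t_{k+1}^2 - t_{k+1} = \omega_k t_k^2$. Multiplying through by $L_{k+1}^{-1}$ and substituting $\omega_k = L_{k+1}/L_k$ turns the right-hand side into $L_{k+1}^{-1}(L_{k+1}/L_k)t_k^2 = L_k^{-1}t_k^2$, which is exactly the asserted identity. This step is little more than a restatement of the defining quadratic.

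For (iii) I would feed the lower bound from (i) into the definition $\theta_{k+1} = (t_k - 1)/t_{k+1}$. Since $t_{k+1} \ge \tfrac12 + \sqrt{\omega_k}\,t_k > \sqrt{\omega_k}\,t_k \ge \sqrt{\omega_k}(t_k - 1)$ with both sides nonnegative (using $t_k \ge 1$), squaring yields $t_{k+1}^2 \ge \omega_k(t_k - 1)^2$, i.e. $\theta_{k+1}^2 \le 1/\omega_k = L_k/L_{k+1}$. The remaining strict inequality $L_k/L_{k+1} < 1$ is the single place where the recursion for $\{t_k\}$ does not suffice: it is equivalent to $\omega_k > 1$, that is $L_{k+1} > L_k$, and I would derive it from the monotonicity built into the backtracking rule, which is designed (following Scheinberg et al.) so that the estimated constants $\{L_k\}$ form an increasing sequence. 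Establishing this monotonicity from the line-search update — rather than from the closed algebra used for (i) and (ii) — is the step I expect to be the main obstacle, and the one that must be argued with care.
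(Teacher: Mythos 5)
Your treatment of (i) and (ii) is correct and complete, and it supplies detail that the paper itself omits entirely (the paper's whole proof is the sentence ``These properties can be derived simply from the definition of $\{t_k\}$''). The preliminary fact $t_k \geq 1$, the two squaring arguments for (i), and the identification of $t_{k+1}$ as the positive root of $t^2 - t - \omega_k t_k^2 = 0$ for (ii) are exactly the right ingredients. Your derivation of the first half of (iii), namely $\theta_{k+1}^2 \leq L_k/L_{k+1}$, by feeding the lower bound of (i) into $\theta_{k+1} = (t_k - 1)/t_{k+1}$, is likewise sound.

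The gap is in your plan for the trailing inequality $L_k/L_{k+1} < 1$. You propose to obtain $L_{k+1} > L_k$ from ``monotonicity built into the backtracking rule,'' but Algorithm~\ref{alg1} has no such monotonicity --- it is deliberately built to violate it. Each iteration begins by setting $L_k \leftarrow L_{k-1}/\sigma$ with $\sigma > 1$, and the constant is then multiplied by $\beta$ only as long as the line-search test fails. Whenever the test passes at the first trial (which happens, for instance, as soon as the previous estimate overshoots $L(f)$ by a factor of $\sigma$), one gets $L_{k+1} = L_k/\sigma < L_k$, hence $L_k/L_{k+1} = \sigma > 1$. Allowing exactly this decrease is the paper's stated purpose (see the abstract and conclusion), and it is also the point of the Scheinberg--Goldfarb--Bai scheme \cite{scheiberg2014fast} that you invoke: it is Beck--Teboulle's original backtracking \cite{beck2009fista} that keeps $\{L_k\}$ non-decreasing, not this one, so you have the attribution reversed. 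Consequently ``$L_k/L_{k+1} < 1$'' is false in general for the sequences this algorithm produces; no proof of it can exist, and the step you flagged as the ``main obstacle'' is an obstacle because the claim itself is wrong as stated (at best it must be weakened to the unconditional $\theta_{k+1}^2 \leq L_k/L_{k+1}$, which you did prove, together with a conditional ``$<1$'' restricted to iterations in which backtracking raised the constant past its predecessor). Your instinct that this part cannot follow from the closed-form algebra of $\{t_k\}$ was right; the resolution is to correct the statement, not to hunt for a monotonicity argument in the line search.
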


\begin{proof}
	These properties can be derived simply from the definition of $\{t_k\}$.
\end{proof}

\section{Convergence Rate}\label{Convergence Rate}
This section focuses on proving that the algorithm has a convergence rate of $O(1/k^2)$ under Assumption 1. Before doing so, let us introduce some auxiliary sequences in \cite{tanabe2023accelerated} to prove the convergence rate. For $k \geq 0$, let $\sigma_k : \mathbb{R}^n \to \mathbb{R}\cup \{- \infty\}$ and $u_k : \mathbb{R}^n \to \mathbb{R}$ be similarly defined in the following form
\begin{equation}\label{aux seq}
	\begin{aligned}
		\sigma_k(z) &:= \min_{i=1,\dotsb,m} [F_i(x_k) - F_i(z)] \\
		\rho_k(z) &:= t_{k-1}x_k - (t_{k-1} - 1)x_{k-1} - z.
	\end{aligned}
\end{equation}

Now, we give an important property of $\sigma_k(z)$ that will be helpful in the following discussions.

\begin{proposition}\label{prop5.2}
	
	Let $\sigma_k(z)$ define as (\ref{aux seq}), we have
	\begin{equation}\label{15}
		\sigma_k(z) - \sigma_{k+1}(z)
		\geq -\frac{L_{k }}{2} [2\langle  y_k -x_{k+1} , y_k -x_k\rangle  + \parallel x_{k+1} - y_k \parallel^2]. ,
	\end{equation}
	and
	\begin{equation}\label{16}
		\begin{aligned}
			\sigma_{k+1}(z) \leq& \frac{L_{k }}{2}[2\langle y_k-x_{k+1},y_k-z\rangle -\|x_{k+1}-y_k\|^2].
		\end{aligned}
	\end{equation}
\end{proposition}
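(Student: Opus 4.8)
The plan is to establish the two bounds separately, but both rest on the same three ingredients. First, the descent inequality that the backtracking test in Algorithm \ref{alg1} guarantees for each smooth part once $x_{k+1}=\hat x_k=p_{L_k}(x_k,y_k)$ is accepted, namely $f_i(x_{k+1})\le f_i(y_k)+\langle\nabla f_i(y_k),x_{k+1}-y_k\rangle+\frac{L_k}{2}\|x_{k+1}-y_k\|^2$ for all $i$. Second, the stationarity condition \eqref{optimal condition a} together with the support property \eqref{optimal condition b} of the multipliers $\lambda(x_k,y_k)\in\Delta^m$ and a subgradient $\tilde g_i\in\partial g_i(x_{k+1})$. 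Third, the two convexity inequalities $f_i(y_k)-f_i(z)\le\langle\nabla f_i(y_k),y_k-z\rangle$ and $g_i(x_{k+1})-g_i(z)\le\langle\tilde g_i,x_{k+1}-z\rangle$, and their analogues at $z=x_k$.

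For \eqref{16} I would first discard the outer minimum from the cheap side: since $\lambda(x_k,y_k)\in\Delta^m$, one has $\sigma_{k+1}(z)=\min_i[F_i(x_{k+1})-F_i(z)]\le\sum_i\lambda_i[F_i(x_{k+1})-F_i(z)]$. Next I bound each summand: adding $g_i(x_{k+1})$ to the line-search descent inequality, subtracting $F_i(z)$, and applying the two convexity inequalities, the gradient terms telescope to $\langle\nabla f_i(y_k),x_{k+1}-z\rangle$, giving $F_i(x_{k+1})-F_i(z)\le\langle\nabla f_i(y_k)+\tilde g_i,x_{k+1}-z\rangle+\frac{L_k}{2}\|x_{k+1}-y_k\|^2$. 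Taking the $\lambda$-weighted average and substituting \eqref{optimal condition a}, which collapses $\sum_i\lambda_i[\nabla f_i(y_k)+\tilde g_i]$ to $-L_k(x_{k+1}-y_k)$, yields $\sigma_{k+1}(z)\le-L_k\langle x_{k+1}-y_k,x_{k+1}-z\rangle+\frac{L_k}{2}\|x_{k+1}-y_k\|^2$. A one-line polarization identity shows this equals the right-hand side of \eqref{16}.

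For \eqref{15} the first move is to exploit that the right-hand side is independent of $z$, so I detach $z$ via the elementary inequality $\min_i a_i-\min_i b_i\ge\min_i(a_i-b_i)$ with $a_i=F_i(x_k)-F_i(z)$ and $b_i=F_i(x_{k+1})-F_i(z)$; this gives $\sigma_k(z)-\sigma_{k+1}(z)\ge\min_i[F_i(x_k)-F_i(x_{k+1})]=-\max_i[F_i(x_{k+1})-F_i(x_k)]$, and it remains to bound the max. Writing $\psi_i:=\langle\nabla f_i(y_k),x_{k+1}-y_k\rangle+g_i(x_{k+1})+f_i(y_k)-F_i(x_k)$, the line-search inequality gives $F_i(x_{k+1})-F_i(x_k)\le\psi_i+\frac{L_k}{2}\|x_{k+1}-y_k\|^2$ for every $i$. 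Because the multipliers are supported on the indices where $\psi_i$ attains its maximum (the active set in \eqref{optimal condition b}), $\max_i\psi_i=\sum_i\lambda_i\psi_i$; bounding this weighted sum with the convexity and subgradient inequalities evaluated at $x_k$ and then \eqref{optimal condition a} produces $\max_i\psi_i\le L_k\langle y_k-x_{k+1},x_{k+1}-x_k\rangle$, and dropping the nonpositive term $-L_k\|y_k-x_{k+1}\|^2$ that appears when rewriting $x_{k+1}-x_k=(y_k-x_k)-(y_k-x_{k+1})$ bounds it further by $L_k\langle y_k-x_{k+1},y_k-x_k\rangle$. Combining the pieces reproduces \eqref{15} exactly.

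The step I expect to be most delicate is the passage from the pointwise maximum $\max_i\psi_i$ to the multiplier-weighted sum in \eqref{15}: it is precisely here that \eqref{optimal condition b} is needed, and one must verify that the indices attaining the maximum defining $\theta_{L_k}(x_k,y_k)$ coincide with the support of $\lambda$. The other point requiring care is that the entire argument runs with the estimated constant $L_k$, not the true $L(f)$; the descent inequality for $f_i$ with $L_k$ is available only because the backtracking test succeeded before $x_{k+1}$ was accepted, so I would state that descent property explicitly as the starting point and keep track of the fact that the $z$-free bound in \eqref{15}, unlike \eqref{16}, must survive the minimum over all $z$.
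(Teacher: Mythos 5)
Your proof follows essentially the same route as the paper's: for (\ref{16}) both arguments bound the minimum by the $\lambda$-weighted sum over $\Delta^m$, apply the descent inequality with constant $L_k$, use the convexity/subgradient inequalities at $z$ to telescope the linear terms, substitute the stationarity condition (\ref{optimal condition a}), and finish with the polarization identity; for (\ref{15}) both detach $z$ via the min-min inequality, replace the resulting maximum by the multiplier-weighted sum using the support property (\ref{optimal condition b}), and repeat the convexity-plus-stationarity argument with comparison point $x_k$ (dropping the same nonpositive term to land exactly on the stated bound). Your two cautionary remarks --- that the $L_k$-descent inequality is available only because the backtracking test accepted $x_{k+1}$, and that the multiplier support must coincide with the indices attaining the maximum in the subproblem --- are precisely the justifications the paper invokes implicitly, so there is no substantive difference between the two proofs.
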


\begin{proof}
	Recall that there exists $\tilde{g}(x_k, y_k) \in \partial g(x_{k+1})$ and Lagrange multiplier $\lambda_i(x_k, y_k) \in R^m$ that satisfy the KKT condition (\ref{optimal condition a}, \ref{optimal condition b}) for the subproblem (\ref{subproblem}), we have
	$$
	\begin{aligned}
		\sigma_{k+1}(z)   = \min_{i = 1,\dotsb,m} [F_i(x_{k+1}) -  F_i(z)] \leq \sum_{i=1}^{m} \lambda_i(x_k, y_k) [F_i(x_{k+1}) -  F_i(z) ].
	\end{aligned}
	$$
	Essential from the descent lemma,
	$$
	\begin{aligned}
		\sigma_{k+1}(z)  &\leq \sum_{i=1}^{m} \lambda_i(x_{k}, y_k) [F_i(x_{k+1}) -  F_i(z) ] \\
		&\leq \sum_{i=1}^{m} \lambda_i(x_{k}, y_k) [\langle \nabla f_i(y_k ), x_{k+1} - y_k \rangle + g_i(x_{k+1})+ f_i(y_k ) - F_i(z )  ] \\
		&\quad+ \frac{L_{k }}{2} \parallel x_{k+1} - y_k \parallel^2.
	\end{aligned}
	$$
	Hence, the convexity of $f_i$ and $g_i$ yields
	$$
	\begin{aligned}
		\sigma_{k+1}(z)  &\leq \sum_{i=1}^{m} \lambda_i(x_{k}, y_k) [\langle \nabla f_i(y_k ) + \tilde{g}(x_k, y_k), x_{k+1} - z \rangle  ] + \frac{L_k}{2} \parallel x_{k+1} - y_k \parallel^2.
	\end{aligned}
	$$
	Using (\ref{optimal condition a}) with $x = x_k$ and $y = y_k$ and from the fact that $x_{k+1} = p_\ell(x_k, y_k)$, we obtain
	$$
	\begin{aligned}
		\sigma_{k+1}(z)
		&\leq \frac{L_{k } }{2}[2\langle y_k - x_{k+1}, y_k -z \rangle - \parallel x_{k+1} - y_k \parallel^2],
	\end{aligned}
	$$
	
	From the definition of $\sigma_k(z)$, we obtain
	$$
	\sigma_k(z) - \sigma_{k+1}(z)
	\geq - \max_{i =1, \dotsb,m}[F_i(x_{k+1}) - F_i(x_k)] 	
	$$
	Essential from the descent lemma,
	$$
	\begin{aligned}
		&\quad \sigma_k(z)  - \sigma_{k+1}(z)  \\
		&\geq - \max_{i =1, \dotsb,m} [\langle \nabla f_i(y_k), x_{k+1} - y_k \rangle + g_i(x_{k+1}) + f_i(y_k) -F_i(x_k)]- \frac{L_{k}}{2} \parallel x_{k+1} - y_k \parallel^2	\\
		&= -\sum_{i=1}^{m} \lambda_i(x_k, y_k) [\langle \nabla f_i(y_k), x_{k+1} - y_k \rangle + g_i(x_{k+1}) + f_i(y_k) -F_i(x_k)] - \frac{L_{k}}{2} \parallel x_{k+1} - y_k \parallel^2 \\
		&= -\sum_{i=1}^{m} \lambda_i(x_k, y_k) [\langle \nabla f_i(y_k), x_{k} - y_k \rangle + f_i(y_k) - f_i(x_k)] \\
		&\quad-\sum_{i=1}^{m} \lambda_i(x_k, y_k) [\langle \nabla f_i(y_k), x_{k+1} - x_k \rangle + g_i(x_{k+1}) - g_i(x_k)] - \frac{L_{k}}{2} \parallel x_{k+1} - y_k \parallel^2.
	\end{aligned}
	$$
	where the  first equality comes from (\ref{optimal condition b}), and the second one follows by taking $x_{k+1} - y_k = (x_k - y_k) + (x_{k+1} - x_k)$. From the convexity of $f_i, g_i$, we show that
	$$
	\begin{aligned}
		\sigma_k(z)  - \sigma_{k+1}(z)  \geq& -\sum_{i=1}^{m} \lambda_i(x_k, y_k) \langle \nabla f_i(y_k) + \tilde{g}(x_k, y_k), x_{k+1} - x_{k} \rangle - \frac{L_{k}}{2} \parallel x_{k+1} - y_k \parallel^2 .	
	\end{aligned}
	$$
	Thus, (\ref{optimal condition a}) and some calculations prove that
	$$
	\begin{aligned}
		\sigma_k(z) - \sigma_{k+1}(z)
		\geq \frac{L_{k}}{2} [2\langle x_{k+1} - y_k, y_k -x_k\rangle  - \parallel x_{k+1} - y_k \parallel^2]. \\
	\end{aligned}
	$$
	So we can get two inequalities of $\sigma_{k}(z)$ and $\sigma_{k+1}(z)$ that we want.
\end{proof}

\begin{theorem}\label{fx0}
	Algorithm \ref{alg1} generates a sequence $\{x_k\}$ such that for all $i =1,\dotsb m$ and $k\geq 0$, we have
	$$F_i(x_k) \leq F_i(x_0).$$
\end{theorem}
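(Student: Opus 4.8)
The plan is to prove the equivalent statement that $\max_{i}[F_i(x_k)-F_i(x_0)]\le 0$ for every $k$, by induction on $k$. The case $k=0$ is trivial, and the first prox-step is taken from $y=x_0$ (the initial extrapolation vanishes because $x_{-1}=x_0$), so the elementary bound $\theta_{L_1}(x_0,x_0)\le\max_i\{F_i(x_0)-F_i(x_0)\}=0$ together with $\theta_{L_1}(x_0,x_0)\ge\max_i[F_i(x_1)-F_i(x_0)]$ already gives $F(x_1)\preceq F(x_0)$. For the inductive step I would assume $F(x_j)\preceq F(x_0)$ for all $j\le k$ and must transport the bound to $x_{k+1}=p_{L_k}(x_k,y_k)$.

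The engine is a one-step inequality obtained by the same computation as in Proposition~\ref{prop5.2}, but carried out with a free comparison point. Since $x_{k+1}$ minimizes the $L_k$-strongly convex map $z\mapsto\varphi_{L_k}(z;x_k,y_k)$, strong convexity yields $\varphi_{L_k}(z;x_k,y_k)\ge\theta_{L_k}(x_k,y_k)+\tfrac{L_k}{2}\|z-x_{k+1}\|^2$ for every $z$. Combining this with $\theta_{L_k}(x_k,y_k)\ge\max_i[F_i(x_{k+1})-F_i(x_k)]$ (valid because the accepted line search guarantees the descent lemma with constant $L_k$) and with the convexity estimate $\varphi_{L_k}(z;x_k,y_k)\le\max_j[F_j(z)-F_j(x_k)]+\tfrac{L_k}{2}\|z-y_k\|^2$ produces, for all $z$,
\[
\max_i[F_i(x_{k+1})-F_i(x_k)]\le\max_j[F_j(z)-F_j(x_k)]+\frac{L_k}{2}\bigl(\|z-y_k\|^2-\|z-x_{k+1}\|^2\bigr),
\]
the multiobjective analogue of the Beck--Teboulle inequality, essentially the same manipulation already used for (\ref{16}).

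Next I would specialize $z$ to the convex combination $\bar z:=(1-t_k^{-1})x_k+t_k^{-1}x_0$. Using (\ref{16}) at $z=\bar z$, the concavity of $z\mapsto\sigma_{k+1}(z)$ (each $-F_i$ is concave) to split $\sigma_{k+1}(\bar z)\ge(1-t_k^{-1})\sigma_{k+1}(x_k)+t_k^{-1}\sigma_{k+1}(x_0)$, and the telescoping identities $t_k(\bar z-x_{k+1})=-\rho_{k+1}(x_0)$ and $t_k(\bar z-y_k)=-\rho_k(x_0)$ that follow from (\ref{fista step}) and (\ref{aux seq}) (after aligning the index bookkeeping of the extrapolation), I would multiply by $2t_k^2/L_k$ and invoke the equality relation Theorem~\ref{th of tk}(ii) in the form $t_k(t_k-1)/L_k=t_{k-1}^2/L_{k-1}$ to reach the recursion
\[
\frac{2t_{k-1}^2}{L_{k-1}}\,\sigma_{k+1}(x_k)+\frac{2t_k}{L_k}\,\sigma_{k+1}(x_0)+\|\rho_{k+1}(x_0)\|^2\le\|\rho_k(x_0)\|^2 .
\]
Property Theorem~\ref{th of tk}(iii), $\theta_{k+1}^2\le L_k/L_{k+1}$, is what keeps the cross terms from flipping sign in this reduction; iterating the recursion and using $\|\rho_{k+1}(x_0)\|^2\ge0$ is meant to control a potential $W_k:=(2t_{k-1}^2/L_{k-1})\,\sigma_k(x_0)+\|\rho_k(x_0)\|^2$ and thereby pin the sign of the gap at $k+1$.

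The \textbf{main obstacle}, and the point where the multiobjective setting genuinely departs from scalar FISTA, is the cross term $\sigma_{k+1}(x_k)=\min_i[F_i(x_{k+1})-F_i(x_k)]$: in the scalar case it equals $\sigma_{k+1}(x_0)-\sigma_k(x_0)$ and the recursion telescopes into $W_{k+1}\le W_k$ automatically, whereas here only the one-sided estimate $\sigma_{k+1}(x_k)\le\sigma_{k+1}(x_0)-\sigma_k(x_0)$ holds, and the $\max_i$ appearing on the left of the one-step inequality does not split across components. This is exactly the failure of non-negativity of the auxiliary term flagged in the introduction, and it leaves a residual of indefinite sign at each step. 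I expect to resolve it precisely through the reconstructed sequence $\{t_k\}$: because its recursion delivers the \emph{exact} relation Theorem~\ref{th of tk}(ii) rather than the one-sided inequality available to the classical $t_k$, the residual $\sigma$-terms can be carried inside $W_k$ and made to cancel along the iteration under the induction hypothesis $\sigma_j(x_0)\le0$, after which the componentwise conclusion $F(x_{k+1})\preceq F(x_0)$ survives. Verifying that this cancellation stays sign-correct under the per-iteration updates of $L_k$ and $\omega_{k-1}$ performed by the backtracking is the delicate step I would check most carefully.
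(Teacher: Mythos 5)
Your base case is correct, and so is the one-step inequality you derive from strong convexity of $\varphi_{L_k}$, convexity of $f_i$, and the accepted line search; the fatal problem is what you do with it next. The statement to be proved is componentwise: $F_i(x_k)\le F_i(x_0)$ for \emph{every} $i$. The moment you substitute $z=\bar z$ and rewrite everything through $\sigma_{k+1}(\bar z)$, $\sigma_{k+1}(x_k)$, $\sigma_{k+1}(x_0)$, every quantity you control is a minimum over components, so even if your recursion
\[
\frac{2t_{k-1}^2}{L_{k-1}}\,\sigma_{k+1}(x_k)+\frac{2t_k}{L_k}\,\sigma_{k+1}(x_0)+\|\rho_{k+1}(x_0)\|^2\le\|\rho_k(x_0)\|^2
\]
(which is indeed derivable) could be telescoped, its strongest possible conclusion would be $\min_i[F_i(x_k)-F_i(x_0)]\le 0$, strictly weaker than the theorem; no bookkeeping recovers individual components after the min has been taken. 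Moreover, the telescoping itself cannot be repaired on this route: the cross term $\sigma_{k+1}(x_k)$ sits on the left-hand side, so you would need the lower bound $\sigma_{k+1}(x_k)\ge\sigma_{k+1}(x_0)-\sigma_k(x_0)$, whereas, as you yourself observe, only the opposite inequality is available in the multiobjective case. Your hope that the reconstructed $\{t_k\}$ rescues the cancellation is misplaced: Theorem \ref{th of tk}(ii) is an algebraic identity between $t_k$ and $L_k$ and is orthogonal to this min-splitting obstruction. (That machinery is adequate for the rate result, Theorem \ref{th5.4}, precisely because the merit function $u_0$ is itself defined through a min; it cannot deliver a componentwise conclusion.)

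The argument the paper actually relies on (it omits the proof and defers to Theorem 5.1 of Tanabe et al.) is the $z=x_k$ specialization of the very inequality you already had, kept in max form so that no component is lost. Taking $z=x_k$ gives, for every $i$,
\[
F_i(x_{k+1})-F_i(x_k)\le\max_j\,[F_j(x_{k+1})-F_j(x_k)]\le\frac{L_k}{2}\bigl(\|x_k-y_k\|^2-\|x_{k+1}-x_k\|^2\bigr).
\]
Since the extrapolation (\ref{fista step}) gives $\|y_k-x_k\|=\theta_k\|x_k-x_{k-1}\|$ (after aligning the index bookkeeping, as you put it) and Theorem \ref{th of tk}(iii) gives $\theta_k^2\le L_{k-1}/L_k$, the right-hand side is at most $\tfrac12\bigl(L_{k-1}\|x_k-x_{k-1}\|^2-L_k\|x_{k+1}-x_k\|^2\bigr)$. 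Summing over iterations, these bounds telescope, and the initial term vanishes because $x_{-1}=x_0$ forces $y_0=x_0$; hence $F_i(x_{k+1})-F_i(x_0)\le-\frac{L_k}{2}\|x_{k+1}-x_k\|^2\le 0$ for every $i$, with no induction hypothesis needed. You had all the ingredients; the error was discarding the max over components too early in favor of the $\sigma/\rho$ calculus borrowed from the rate proof.
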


\begin{proof}
	The proof is similar to [Theorem 5.1 \cite{tanabe2023accelerated}]; we omit it.
\end{proof}

\begin{theorem}\label{th5.4}
	Suppose $\left\{x_k\right\}$ and $\left\{y^k\right\}$ be the sequences generated by algorithm \ref{alg1}, for any $z\in\mathbb{R}^n$, it holds that
	$$
	u_{0}(x_k) \leq \frac{4\beta L(f)R}{(k+1)^2} 
	$$
\end{theorem}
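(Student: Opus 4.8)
The plan is to run the classical FISTA potential-function argument of Beck--Teboulle, but with the variable step sizes controlled by the Scheinberg-type relation in Theorem~\ref{th of tk}(ii). Fix an arbitrary $z \in \mathbb{R}^n$ and introduce the potential
$$
W_k(z) := \frac{2 t_k^2}{L_k}\,\sigma_k(z) + \|\rho_k(z)\|^2 ,
$$
where $\sigma_k$ and $\rho_k$ are the auxiliary sequences from (\ref{aux seq}). Since $u_0(x_k) = \sup_{z}\sigma_k(z)$ by the definitions of $u_0$ and $\sigma_k$, once I control $\sigma_k(z)$ for every $z$ I can pass to the supremum. The whole proof reduces to showing that $\{W_k(z)\}$ is non-increasing and that its initial value is $\|x_0 - z\|^2$.

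For the monotonicity I would combine the two estimates of Proposition~\ref{prop5.2}: multiply (\ref{15}) by $(t_{k+1}-1)\ge 0$, add (\ref{16}), and then multiply the result by $t_{k+1}$. The left-hand side becomes $t_{k+1}(t_{k+1}-1)\sigma_k(z) - t_{k+1}^2\sigma_{k+1}(z)$, while on the right the inner-product and squared-norm terms should complete into a single square. Using the momentum step $y_{k+1} = x_k + \theta_{k+1}(x_k - x_{k-1})$ with $\theta_{k+1} = (t_k-1)/t_{k+1}$ from (\ref{fista step}) and expanding, the two squares that appear are exactly $\|\rho_{k+1}(z)\|^2$ and $\|\rho_k(z)\|^2$; this is the standard FISTA telescoping identity and is where the definition of $y_{k+1}$ enters. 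The right-hand side then carries a factor equal to half the step length used to produce $x_{k+1}$, so dividing through by that factor and invoking Theorem~\ref{th of tk}(ii) to rewrite the coefficient $t_{k+1}(t_{k+1}-1)$ against the step lengths yields
$$
\frac{2 t_k^2}{L_k}\sigma_k(z) - \frac{2 t_{k+1}^2}{L_{k+1}}\sigma_{k+1}(z) \ge \|\rho_{k+1}(z)\|^2 - \|\rho_k(z)\|^2 ,
$$
that is, $W_{k+1}(z) \le W_k(z)$. For the base case I would apply (\ref{16}) at the first step, where $y_1 = x_0$, together with the elementary identity $2\langle x_0 - x_1, x_0 - z\rangle - \|x_1 - x_0\|^2 = \|x_0 - z\|^2 - \|x_1 - z\|^2$, to get $W_1(z) \le \|x_0 - z\|^2$. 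Telescoping then gives $\tfrac{2t_k^2}{L_k}\sigma_k(z) \le W_k(z) \le \|x_0 - z\|^2$, hence $\sigma_k(z) \le \tfrac{L_k}{2 t_k^2}\|x_0 - z\|^2$.

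Next I would convert $L_k/t_k^2$ into the rate. Writing $a_k := t_k/\sqrt{L_k}$, Theorem~\ref{th of tk}(i) gives $t_{k+1} \ge \tfrac12 + \sqrt{\omega_k}\,t_k$; dividing by $\sqrt{L_{k+1}}$ and using $\sqrt{\omega_k}/\sqrt{L_{k+1}} = 1/\sqrt{L_k}$ turns this into $a_{k+1} \ge a_k + 1/(2\sqrt{L_{k+1}})$. The backtracking rule guarantees $L_k \le \beta L(f)$ for every $k$, since any rejected trial value $L_k/\beta$ must violate the descent test and hence be smaller than $L(f)$; therefore $a_{k+1} \ge a_k + 1/(2\sqrt{\beta L(f)})$, and telescoping gives $a_k \ge (k+1)/(2\sqrt{\beta L(f)})$ up to the contribution of $a_1$, i.e. $t_k^2/L_k \ge (k+1)^2/(4\beta L(f))$. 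Substituting back and then passing to the supremum over $z$ finishes the proof: because $\sigma_k$ depends on $z$ only through $F(z)$, it is constant on each fibre $F^{-1}(\{F^\ast\})$ and can only increase as the components of $F(z)$ are lowered, so together with the descent property $F(x_k)\le F(x_0)$ of Theorem~\ref{fx0} the supremum defining $u_0(x_k)$ may be restricted to weakly Pareto optimal $z \in X^\ast \cap \mathcal{L}_F(F(x_0))$; minimising $\|x_0 - z\|^2$ over the fibre and applying Assumption~\ref{a1} to bound that infimum by $R$ upgrades the per-$z$ estimate to the stated $u_0(x_k) \le \tfrac{4\beta L(f)R}{(k+1)^2}$ after tracking constants.

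I expect the genuine obstacle to be the middle step: verifying that, with $L_k$ changing from one iteration to the next, the combination of (\ref{15})--(\ref{16}) still completes into the clean difference $\|\rho_{k+1}(z)\|^2 - \|\rho_k(z)\|^2$ so that the telescoping survives. In the constant-$L$ FISTA this is routine, but here it hinges on the equality in Theorem~\ref{th of tk}(ii) and on the step length generating $x_{k+1}$ being indexed consistently with $t_{k+1}$ and $\rho_{k+1}$; keeping the indices of $\{t_k\}$, $\{L_k\}$, $\{y_k\}$ and $\{\rho_k\}$ aligned is the part most likely to hide a sign or off-by-one slip. Property (iii) of Theorem~\ref{th of tk}, which forces $L_{k+1} > L_k$ and hence $\omega_k \ge 1$, is what certifies that $\{t_k\}$ keeps growing at the required rate and that the ratios appearing in the square completion stay controlled.
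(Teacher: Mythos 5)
Your proposal is correct and follows essentially the same route as the paper's proof: the same weighted combination of the two inequalities in Proposition~\ref{prop5.2}, completion of squares into $\|\rho_{k+1}(z)\|^2 - \|\rho_k(z)\|^2$ via Theorem~\ref{th of tk}(ii), telescoping from the base case $2L_0^{-1}\sigma_1(z) + \|x_1-z\|^2 \le \|x_0-z\|^2$, the Scheinberg-type lower bound $t_k^2/L_k \ge k^2/(4\beta L(f))$, and the final passage to the supremum using Theorem~\ref{fx0} and Assumption~\ref{a1}. The details you spell out (the backtracking bound $L_k \le \beta L(f)$ and the restriction of the supremum to $X^\ast \cap \mathcal{L}_F(F(x_0))$) are exactly the steps the paper delegates to its citations of Scheinberg et al.\ and Tanabe et al., and your ordering of the weighted combination (multiply (\ref{15}) by $t_{k+1}-1$, add (\ref{16}), then scale by $t_{k+1}$) is in fact the correct one matching the paper's displayed inequality.
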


\begin{proof}
	Let(\ref{15}) multiply by $(t_{k+1} - 1)$ and (\ref{16})) multiply by $t_{k+1}$, then add them, we get:
	\begin{equation}\label{reswkwk1}
		\begin{aligned}
			2L_k^{-1} t_k(t_{k} - 1)\sigma_{k}(z)  - 2L_k^{-1} t_{k}^2\sigma_{k+1}(z) &\geq  2t_{k}\langle x_{k+1}-y_{k}, t_{k} y_k - (t_{k}-1)x_k - z\rangle \\
			&\quad+t_{k}^2\|x_{k+1}-y_{k}\|^{2},
		\end{aligned}
	\end{equation}
	
	With 
	$$
	\parallel b - a \parallel^2 + 2 \langle b -a, a-c \rangle = \parallel b - c\parallel^2 - \parallel a - c \parallel^2 \quad \forall a,b,c \in \mathbb{R}^n
	$$
	and Theorem \ref{th of tk} (ii), we get
	$$
	2L_{k-1}^{-1} t_{k-1}^2\sigma_{k}(z)  - 2L_k^{-1} t_{k}^2\sigma_{k+1}(z) \geq \parallel \rho_{k+1}(z) \parallel^2 - \parallel \rho_k(z) \parallel^2 .
	$$

	Applying this inequality repeatedly, we have
	
	\begin{equation}
		2L_k^{-1} t_{k}^2\sigma_{k+1}(z) + \parallel \rho_{k+1} (z)\parallel^2 \leq 2L_0^{-1}t_{0}^2\sigma_{1}(z) + \parallel \rho_1(z) \parallel^2.
	\end{equation}
	
	And we can find that
	\begin{equation}
		2L_0^{-1} t_{0}^2\sigma_{1}(z) + \parallel \rho_1 (z)\parallel^2  = 2L_0^{-1} \sigma_{1}(z) + \parallel x_1 - z \parallel^2 \leq \parallel x_0 - z \parallel^2 .
	\end{equation}
	
	Now we must find the down bounded of $L^{-1}_k t_k^2 $. From definition of $\{t_k\}$, we get
	$$
	t_{k+1} \geq \sqrt{\frac{L_{k+1}}{L_k}}t_k + \frac{1}{2}.
	$$
	So, using the same discussion in [Lemma 3.4 \cite{scheiberg2014fast}], we get
	$$
	L^{-1}_k t_k^2 \geq \frac{k^2}{4\beta L(f)}.
	$$
	
	Hence, we find that
	
	$$
	\sigma_{k+1}(z) \leq \frac{4\beta L(f)}{(k+1)^2} \parallel x_0 - z\parallel^2
	$$
	
	With similar arguments used in the proof of Theorem 3.1 (see [41, Theorem5.2]), we get the desired inequality:
	$$
	u_{0}(x_k) \leq \frac{4\beta L(f)R}{(k+1)^2} 
	$$
\end{proof}

We conclude this section by demonstrating that the global convergence of Algorithm \ref{alg1} concerning weak Pareto optimality is also assured, leveraging the aforementioned complexity result.

\begin{corollary}[\cite{tanabe2023accelerated} Corollary 5.2]
	Suppose that Assumption \ref{assump2.1} holds. Then, every accumulation point of the sequence $\{ x_k\}$ generated by Algorithm \ref{alg1} is weakly Pareto optimal for (\ref{mop}). In particular, if the level set $\mathcal{L}_F(F(x^0))$ is bounded, then $\{x^{k}\}$ has accumulation points, and they are all weakly Pareto optimal. 
	
	Moreover, if each $F_{i}$ is strictly convex, then the accumulation points are Pareto optimum, i.e., there are no points with the same or smaller objective function values and at least one objective function value being strictly smaller.
\end{corollary}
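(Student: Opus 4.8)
The plan is to combine the quantitative rate from Theorem~\ref{th5.4} with the characterization of weak Pareto optimality via the merit function $u_0$. First I would observe that Theorem~\ref{th5.4} forces $u_0(x_k)\to 0$ as $k\to\infty$, since the bound $4\beta L(f)R/(k+1)^2$ vanishes (here $R<\infty$ by Assumption~\ref{a1}). To transfer this to an arbitrary accumulation point $\bar{x}$, I would use that $u_0$ is lower semicontinuous: from its definition~(\ref{3}) it is the pointwise supremum over $z$ of the continuous maps $x\mapsto \min_i[F_i(x)-F_i(z)]$, and a supremum of (lower semi)continuous functions is lower semicontinuous. Hence, for a subsequence $x_{k_j}\to\bar{x}$, one gets $0\le u_0(\bar{x})\le\liminf_j u_0(x_{k_j})=0$, where the first inequality is the nonnegativity of $u_0$ and the middle inequality is lower semicontinuity. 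Therefore $u_0(\bar{x})=0$, and by the merit-function lemma (the characterization $u_0(x)=0 \Leftrightarrow x$ weakly Pareto optimal) $\bar{x}$ is weakly Pareto optimal, settling the first assertion.

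For the existence of accumulation points under a bounded level set, I would invoke Theorem~\ref{fx0}, which gives $F_i(x_k)\le F_i(x_0)$ for all $i$ and $k$; equivalently $x_k\in\mathcal{L}_F(F(x_0))$ for every $k$. Since $F$ is continuous, this level set is closed, and by hypothesis it is bounded, hence compact. Thus $\{x_k\}$ lies in a compact set and must possess accumulation points, each of which is weakly Pareto optimal by the first part.

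The final assertion upgrades weak to full Pareto optimality under strict convexity, and I would argue by contradiction. Suppose an accumulation point $\bar{x}$ is weakly Pareto optimal but not Pareto optimal; then there is some $x$ with $F_i(x)\le F_i(\bar{x})$ for all $i$ and $F_j(x)<F_j(\bar{x})$ for some $j$, so in particular $x\ne\bar{x}$. Evaluating at the midpoint and using strict convexity of each $F_i$ gives $F_i\!\left(\tfrac{x+\bar{x}}{2}\right)<\tfrac12 F_i(x)+\tfrac12 F_i(\bar{x})\le F_i(\bar{x})$ for every $i$, that is $F\!\left(\tfrac{x+\bar{x}}{2}\right)<F(\bar{x})$ in every component, contradicting the weak Pareto optimality of $\bar{x}$. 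Hence $\bar{x}$ must be Pareto optimal.

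I expect the main obstacle to be the first step: correctly identifying that $u_0$ is lower semicontinuous (rather than continuous) and matching the direction of the semicontinuity inequality with the sign of $u_0$, so that the limit $u_0(\bar{x})=0$ genuinely follows from $u_0(x_k)\to 0$. The remaining pieces, namely compactness for the existence of accumulation points and the midpoint convexity computation, are routine once this semicontinuity argument is in place.
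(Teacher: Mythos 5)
Your proof is correct and follows essentially the same route as the paper, which defers to Corollary 5.2 of \cite{tanabe2023accelerated}: the rate in Theorem~\ref{th5.4} forces $u_0(x_k)\to 0$, lower semicontinuity plus nonnegativity of $u_0$ transfer this to any accumulation point, Theorem~\ref{fx0} together with compactness of the bounded level set yields existence of accumulation points, and the midpoint strict-convexity argument upgrades weak Pareto to Pareto optimality. No gaps.
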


\section{Numerical Experiment}\label{Numerical experiment}
In this section,we present numerical results to demonstrate the performance of NBPGMO for various problems. All numerical experiments  were implemented in Python3.10 and executed on a personal computer with a 12th Gen Intel(R) Core(TM) i7-12700H CPU @ 2.70 GHz and 16GB RAM. For NBPGMO, we set $L_0 = 1, \beta = 2$ and the stopping criterion $\parallel x_k - y_k \parallel_{\infty} \leq 10^{-3}$ for all tested algorithms to ensure that the algorithms terminate after a finite number of iterations. We also set the maximum number of iterations to 1000.

For convenience, we list all the test problems in Table 1 as follows and use \textbf{mFISTA\_backtracking} to represent Algorithm \ref{alg1}. Then, we use the \textbf{mFISTA} algorithm in \cite{tanabe2023accelerated} to compare the performance. The second and third columns present the dimensions of variables and objective functions, respectively. The lower and upper bounds of variables are denoted by $\mathbf{x}_L$ and $\mathbf{x}_U$, respectively. For each problem, we perform 200 computations using the same initial points for different tested algorithms. The initial points are randomly selected within the specified lower and upper bounds.

\begin{table}[htbp]
	\centering
	\caption{Test Problems}\label{test_problem}
	\begin{tabular}{llllll}
		\toprule
		Problem & $n$ & $m$ & $\mathbf{x}_L$ & $\mathbf{x}_U$ & Reference \\
		\midrule
		BK1 \& $\ell_1$ & 2 & 2 & $(-5,-5)$ & $(10,10)$ & \cite{huband2006rebview} \\
		Deb & 2 & 2 & $(0.1, 0.1)$ & $(1, 1)$ & \cite{huband2006rebview} \\
		DD1 & 5 & 2 & $(-20, \dotsb, -20)$ & $(20, \dotsb, 20)$ & \cite{dd1998normal} \\
		FF1 & 2 & 2 & $(-1,-1)$ & $(1,1)$ & \cite{huband2006rebview} \\
		Hil1 & 2 & 2 & $(0,0)$ & $(1,1)$ & \cite{hil2001generalized} \\
		Imb2 & 2 & 2 & $(-2, -2)$ & $(2,2)$ & \cite{chen2023bb} \\
		JOS1 \& $\ell_1$ & 2 & 2 & $(-5,-5)$ & $(5,5)$ & \cite{jin2001dynamic} \\
		MHHM1 & 1 & 3 & $0$ & $1$ & \cite{huband2006rebview} \\
		MHHM2 & 2 & 3 & $(0,0)$ & $(1,1)$ & \cite{huband2006rebview} \\
		PNR & 2 & 2 & $(-2,-2)$ & $(2,2)$ & \cite{preuss2006pareto} \\
		SP1 \& $\ell_1$ & 2 & 2 & $(2,-2)$ & $(3,3)$ & \cite{huband2006rebview} \\
		VFM1 & 2 & 3 & $(-2,-2)$ & $(2,2)$ & \cite{huband2006rebview} \\
		VU1 \& $\ell_1$ & 2 & 2 & $(-3,-3)$ & $(3,3)$ & \cite{huband2006rebview} \\
		WIT1 & 2 & 2 & $(-2,-2)$ & $(2,2)$ & \cite{witting2012numerical} \\
		WIT2 & 2 & 2 & $(-2,-2)$ & $(2,2)$ & \cite{witting2012numerical} \\
		WIT3 & 2 & 2 & $(-2,-2)$ & $(2,2)$ & \cite{witting2012numerical} \\
		WIT4 & 2 & 2 & $(-2,-2)$ & $(2,2)$ & \cite{witting2012numerical} \\
		WIT5 & 2 & 2 & $(-2,-2)$ & $(2,2)$ & \cite{witting2012numerical} \\
		WIT6 & 2 & 2 & $(-2,-2)$ & $(2,2)$ & \cite{witting2012numerical} \\
		\bottomrule
	\end{tabular}
	\label{tab0}
\end{table}

The recorded averages from the 200 runs include the number of iterations, CPU time, and purity results of the obtained Pareto fronts.


\begin{table}[htbp] 
	\centering
	\caption{Performance Comparison of NBPGMO and APGMO}
	\begin{tabular}{lccc|ccc}
		\toprule
		Problem & \multicolumn{3}{c}{NBPGMO} & \multicolumn{3}{c}{APGMO} \\
		\cmidrule(r){2-4} \cmidrule(l){5-7}
		& Iter & Time & Purity & Iter & Time & Purity \\
		\midrule
		BK1 \& $\ell_1$   & \textbf{2745}  & \textbf{4.39}  & \textbf{0.42}  & 9001   & 39.97 & 0.28 \\
		DD1    & \textbf{8633}  & \textbf{11.36} & \textbf{0.84}  & 19719  & 13.45 & 0.83 \\
		Deb    & \textbf{4425}  & \textbf{5.66}  & \textbf{0.96}  & 7146   & 6.09  & 0.94 \\
		FF1    & \textbf{2627}  & \textbf{3.40}  & \textbf{0.79}  & 36835  & 41.20 & 0.64 \\
		Hil1   & \textbf{1910}  & \textbf{4.51}  & 0.41  & 9466   & 12.64 & \textbf{0.57} \\
		Imb2   & \textbf{6438}  & \textbf{8.19}  & \textbf{0.77}  & 138364 & 92.50 & 0.26 \\
		JOS1 \& $\ell_1$   & \textbf{5324}  & \textbf{6.66}  & 0.40  & 15035  & 72.27 & \textbf{0.90} \\
		MHHM1  & \textbf{1324}  & \textbf{1.66}  & \textbf{0.81}  & 1786   & 1.79  & \textbf{0.81} \\
		MHHM2  & \textbf{1493}  & \textbf{1.88}  & \textbf{0.89}  & 2322   & 2.01  & \textbf{0.89} \\
		PNR    & \textbf{1310}  & \textbf{2.13}  & 0.40  & 3529   & 2.80  & \textbf{0.67} \\
		SP1	\& $\ell_1$   & \textbf{1959}	& \textbf{3.19}  & \textbf{0.95}  & 9253	&55.01 	&0.73 \\ 			
		VFM1   & \textbf{1412}  & \textbf{0.99}  & \textbf{0.53}  & 26022  & 27.80 & \textbf{0.53} \\
		VU1 \& $\ell_1$   & \textbf{12429}	& \textbf{15.93} & \textbf{0.82}  & 149583	&286.36 	&0.33 		\\		
		WIT1   & \textbf{45644} & \textbf{59.23} & 0.91  & 58168  & 61.85 & \textbf{0.95} \\
		WIT2   & \textbf{22408} & \textbf{30.05} & \textbf{0.73}  & 60802  & 46.06 & 0.71 \\
		WIT3   & \textbf{17278} & \textbf{22.58} & \textbf{0.77}  & 41060  & 31.61 & 0.73 \\
		WIT4   & \textbf{5584}  & \textbf{8.44}  & \textbf{0.70}  & 20768  & 15.60 & 0.60 \\
		WIT5   & \textbf{4777}  & \textbf{7.00}  & \textbf{0.88}  & 6694   & 7.72  & 0.51 \\
		WIT6   & \textbf{1105}  & \textbf{1.71}  & \textbf{0.91}  & 2105   & 1.82  & 0.90 \\
		\bottomrule
	\end{tabular}
	\label{tab1}
\end{table}

The obtained Pareto sets and fronts for some test problems are depicted in Figures \ref{f1}.

\begin{figure}[H]
	\centering
	\subfigure[BK1 \& $\ell_1$]
	{
		\begin{minipage}[H]{.2\linewidth}
			\centering
			\includegraphics[scale=0.1]{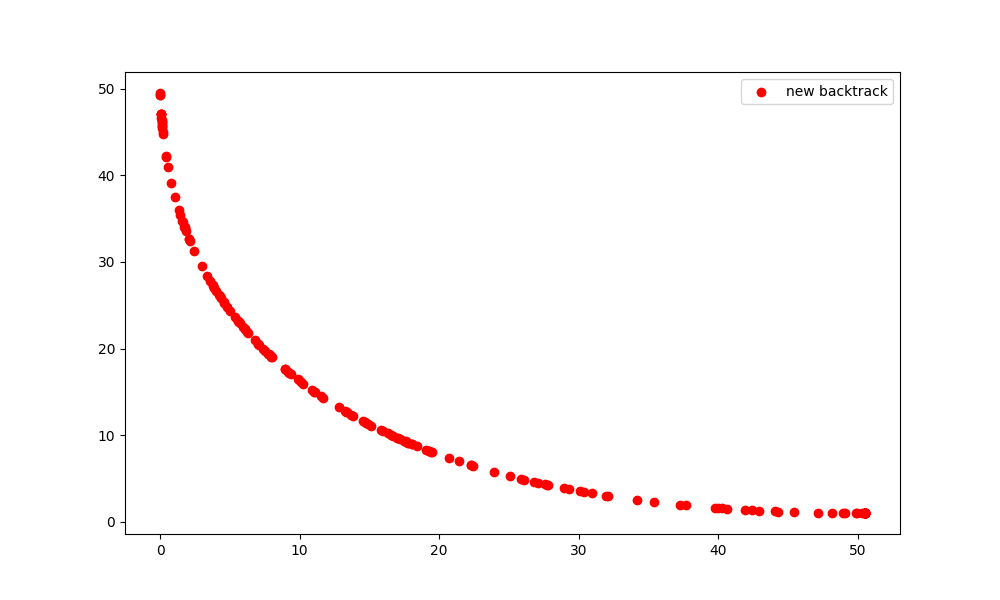} \\
			\includegraphics[scale=0.1]{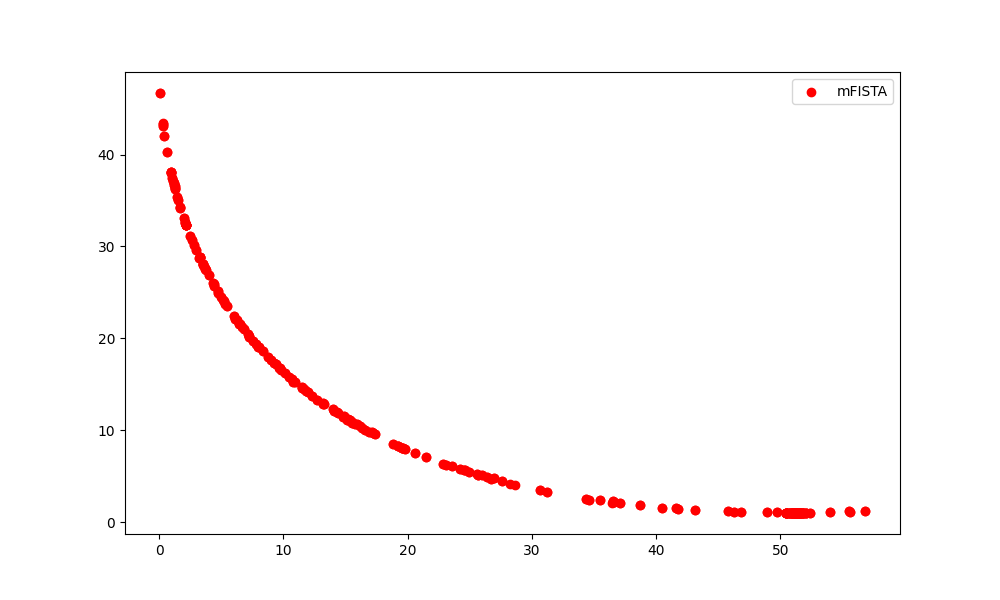}
		\end{minipage}
	}
	\subfigure[JOS1 \& $\ell_1$]
	{
		\begin{minipage}[H]{.2\linewidth}
			\centering
			\includegraphics[scale=0.1]{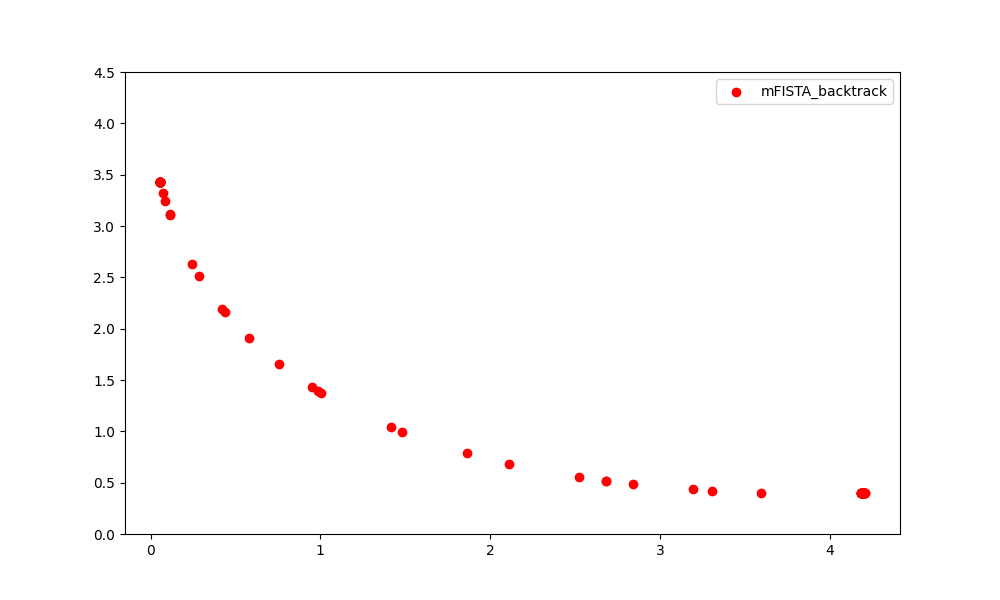} \\
			\includegraphics[scale=0.1]{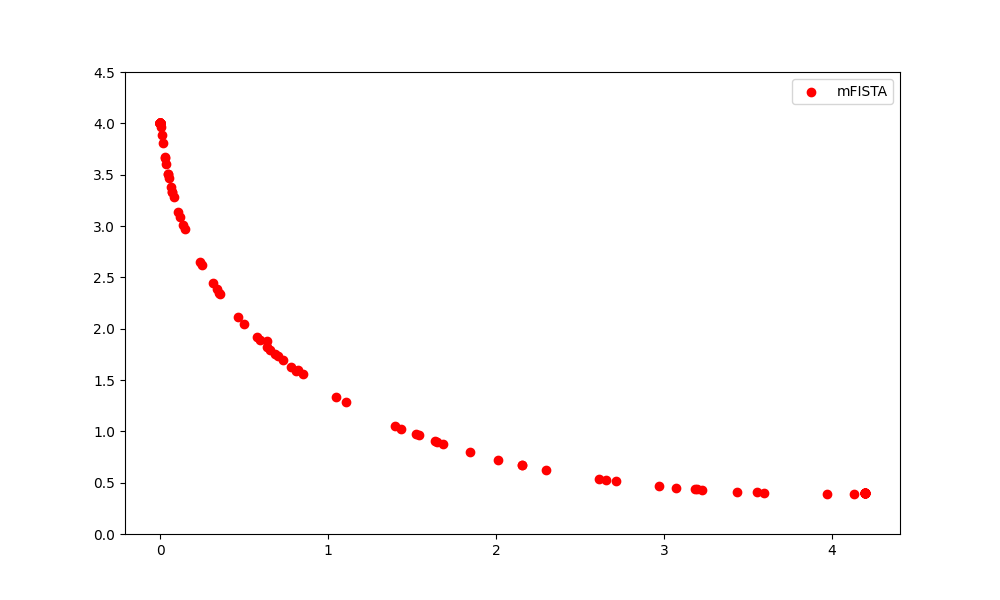}
		\end{minipage}
	}
	\subfigure[SP1 \& $\ell_1$]
	{
		\begin{minipage}[H]{.2\linewidth}
			\centering
			\includegraphics[scale=0.1]{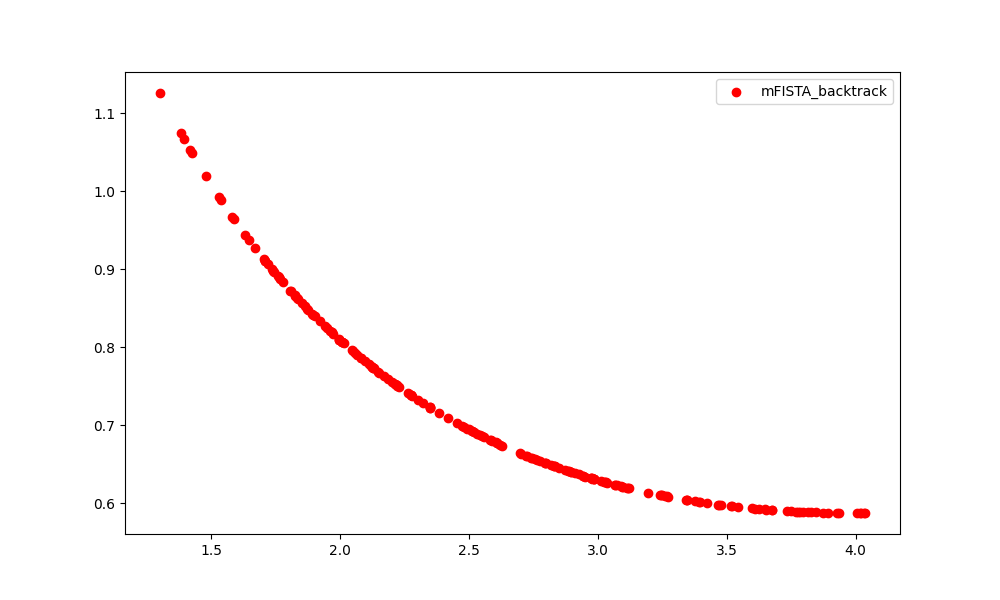} \\
			\includegraphics[scale=0.1]{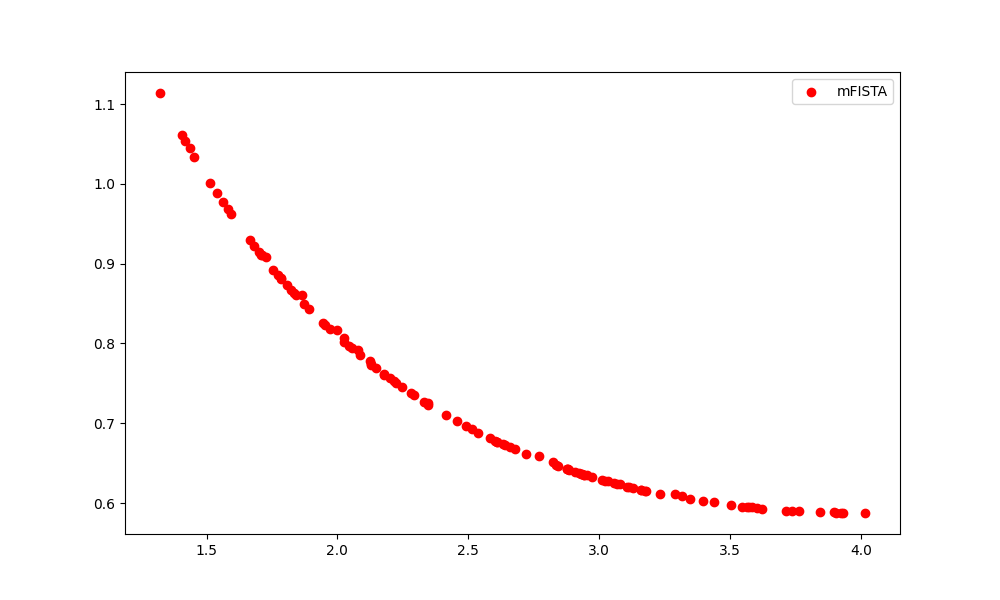}
		\end{minipage}
	}
	\subfigure[VU1 \& $\ell_1$]
	{
		\begin{minipage}[H]{.2\linewidth}
			\centering
			\includegraphics[scale=0.1]{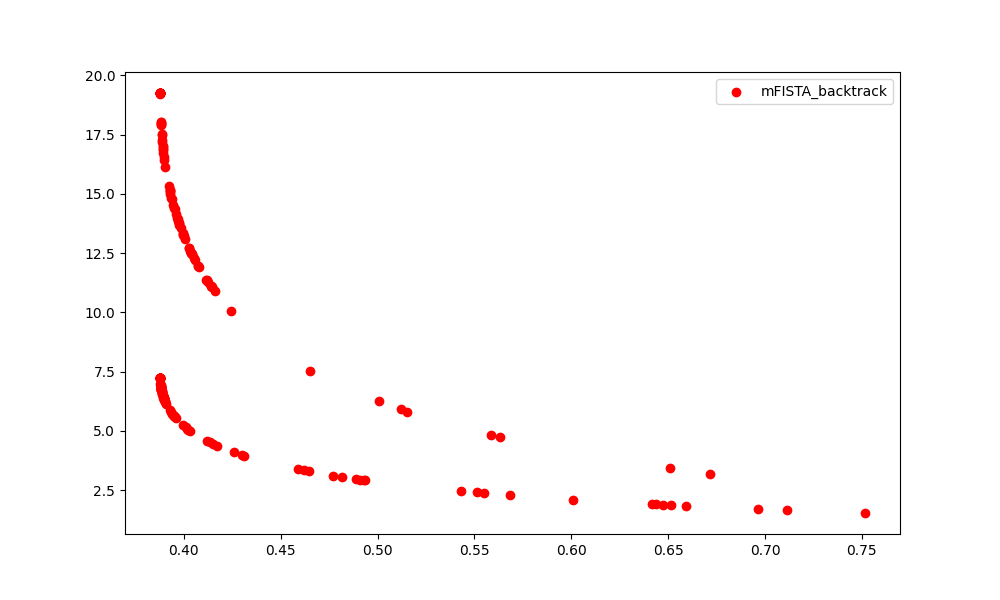} \\
			\includegraphics[scale=0.1]{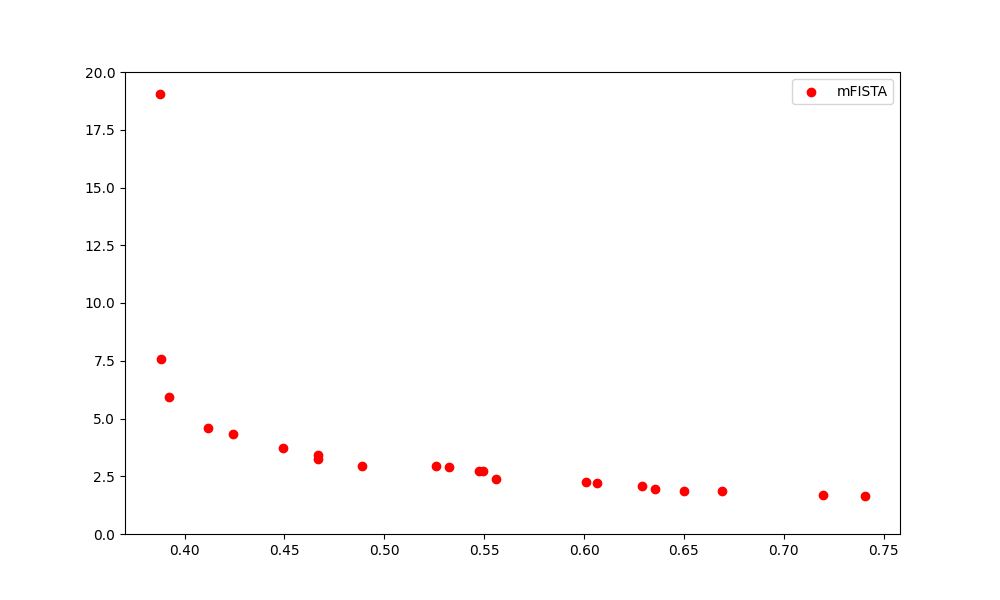}
		\end{minipage}
	}

\subfigure[FF1]
{
	\begin{minipage}[H]{.2\linewidth}
		\centering
		\includegraphics[scale=0.1]{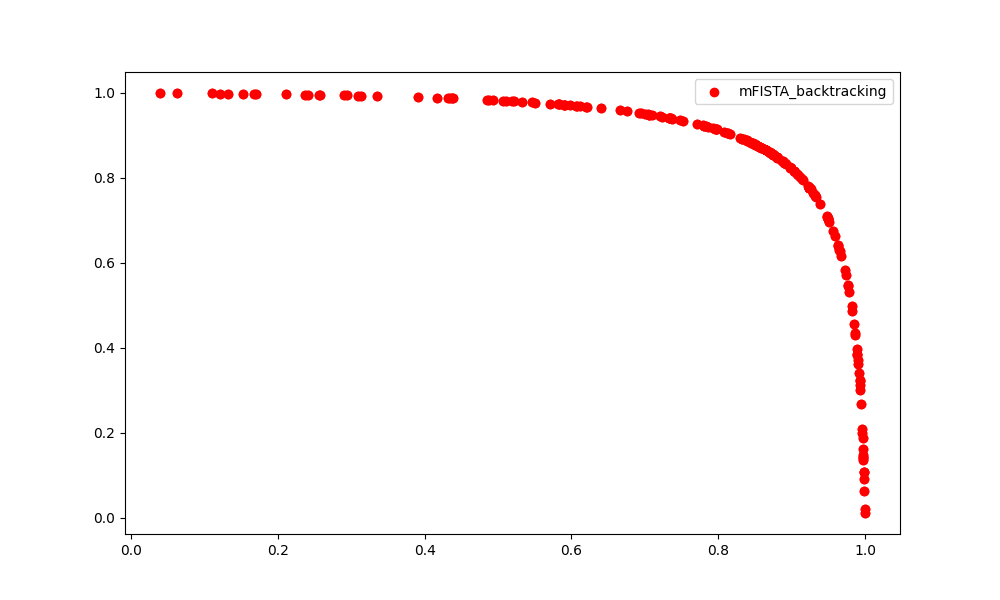} \\
		\includegraphics[scale=0.1]{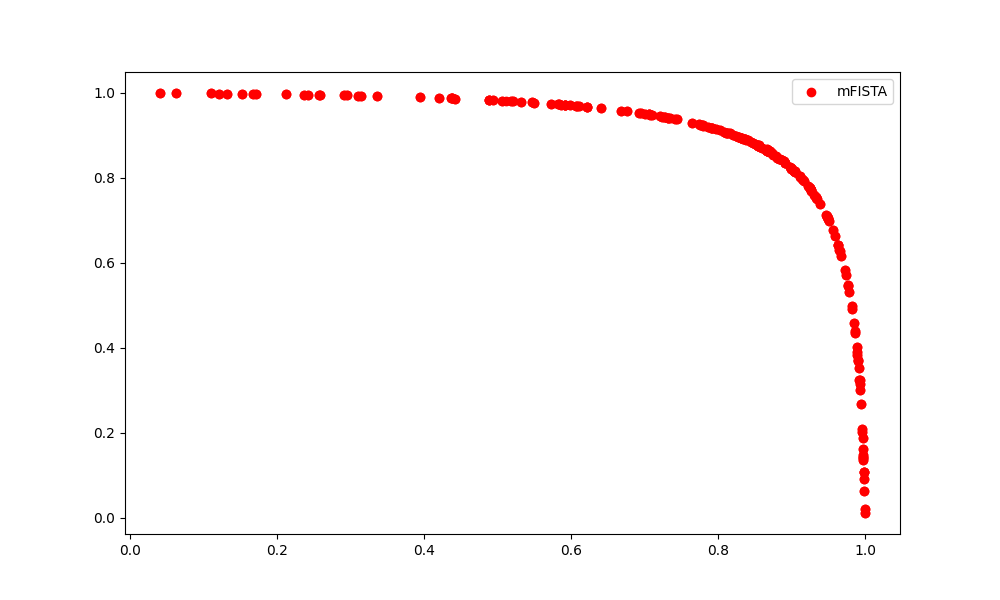}
	\end{minipage}
}
\subfigure[Hil1]
{
	\begin{minipage}[H]{.2\linewidth}
		\centering
		\includegraphics[scale=0.1]{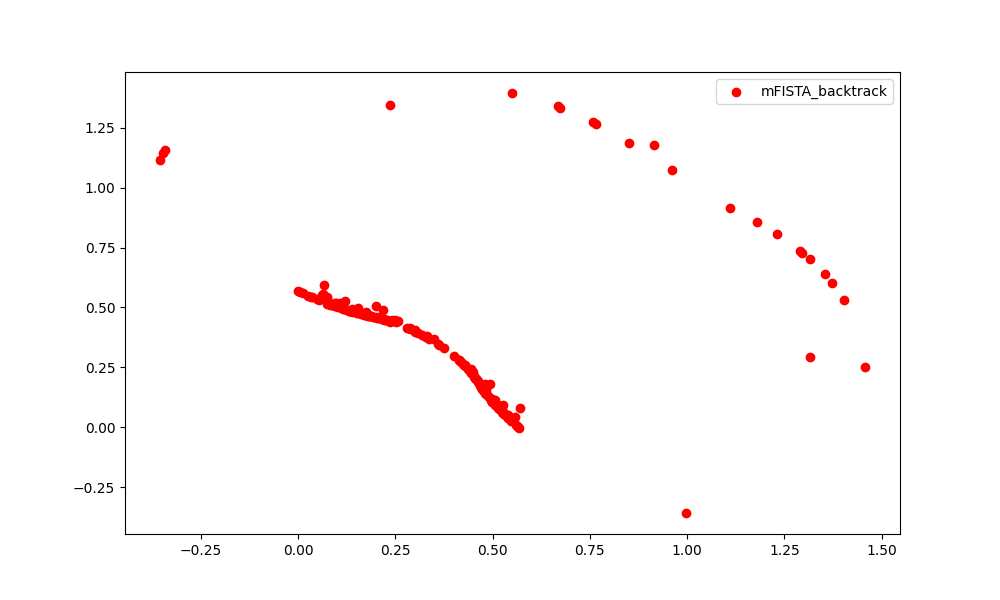} \\
		\includegraphics[scale=0.1]{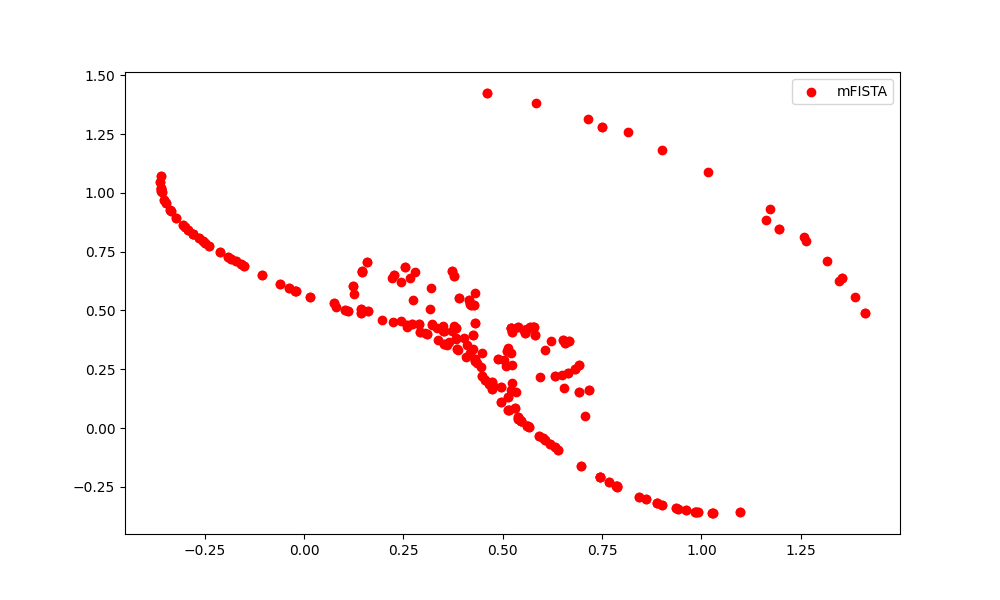}
	\end{minipage}
}
\subfigure[PNR]
{
	\begin{minipage}[H]{.2\linewidth}
		\centering
		\includegraphics[scale=0.1]{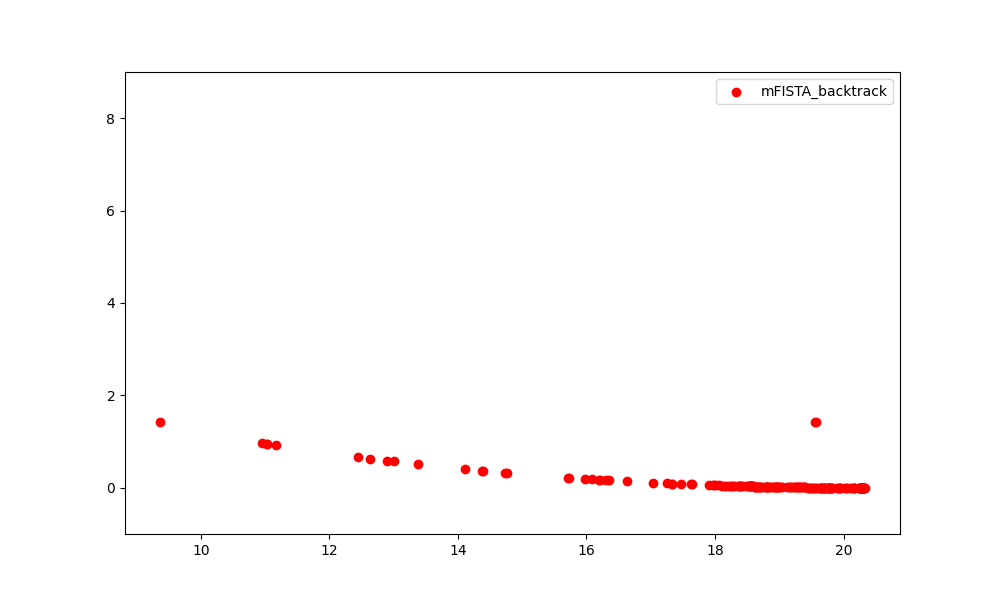} \\
		\includegraphics[scale=0.1]{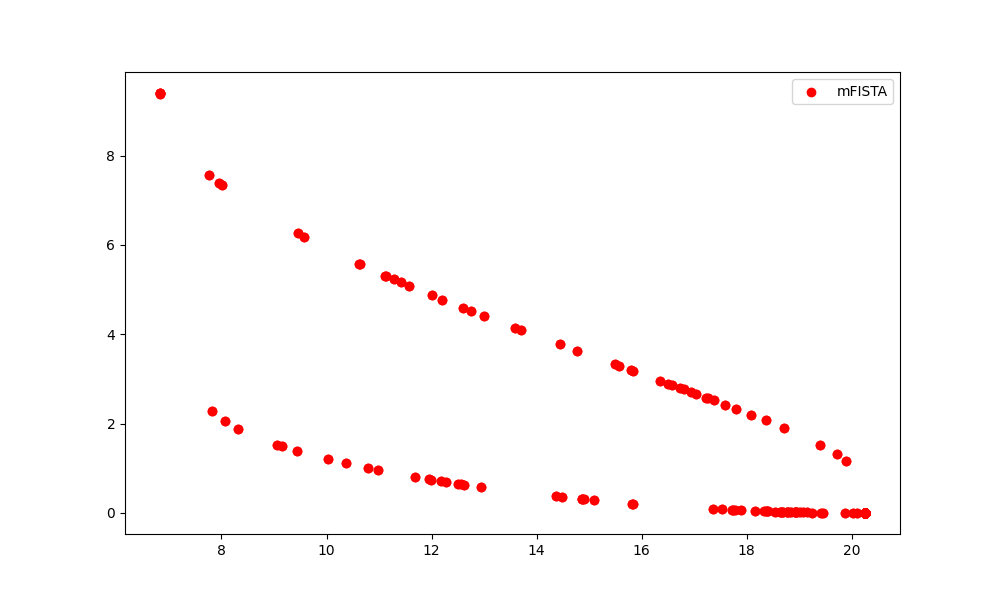}
	\end{minipage}
}
\subfigure[DD1]
{
	\begin{minipage}[H]{.2\linewidth}
		\centering
		\includegraphics[scale=0.1]{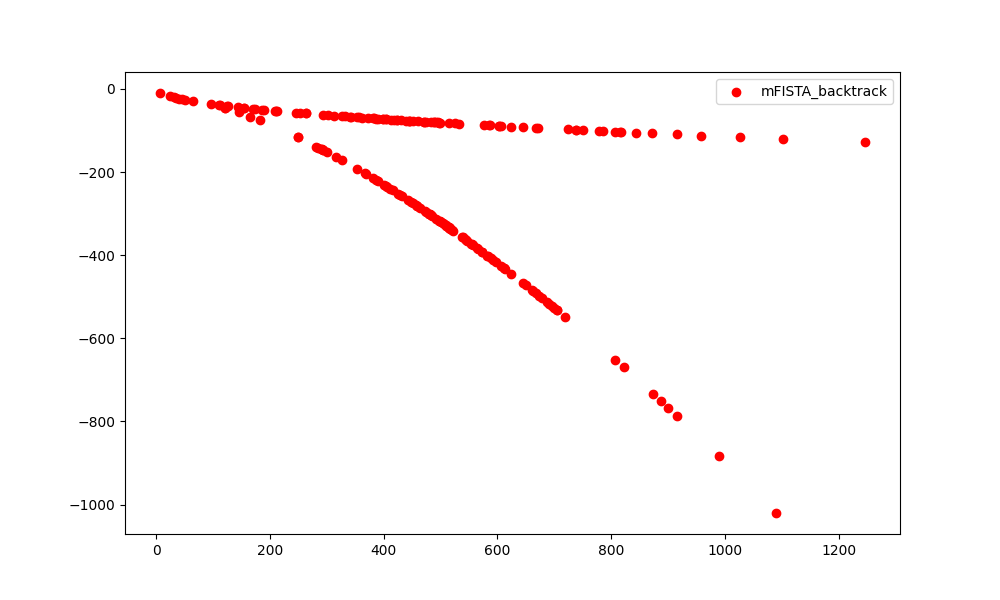} \\
		\includegraphics[scale=0.1]{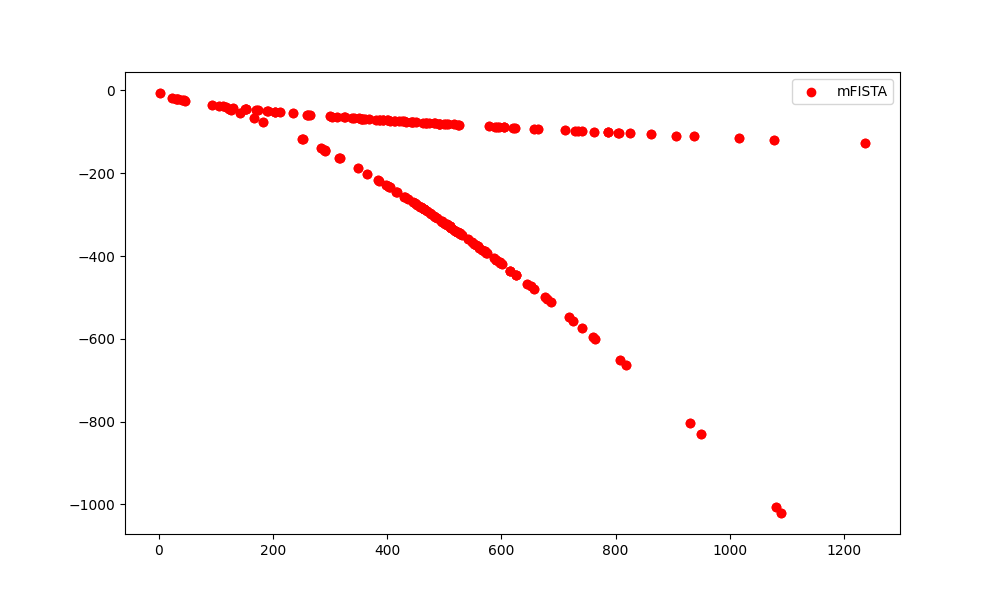}
	\end{minipage}
}

	\caption{Numerical results in variable space obtained by mFISTA\_backtracking (\textbf{Top}) and mFISTA for problems BK1 \& $\ell_1$ to DD1.}
	\label{f1}
\end{figure}

\begin{figure}[H]
	\centering	
	\subfigure[VFM1]
	{
		\begin{minipage}[H]{.3\linewidth}
			\centering
			\includegraphics[scale=0.2]{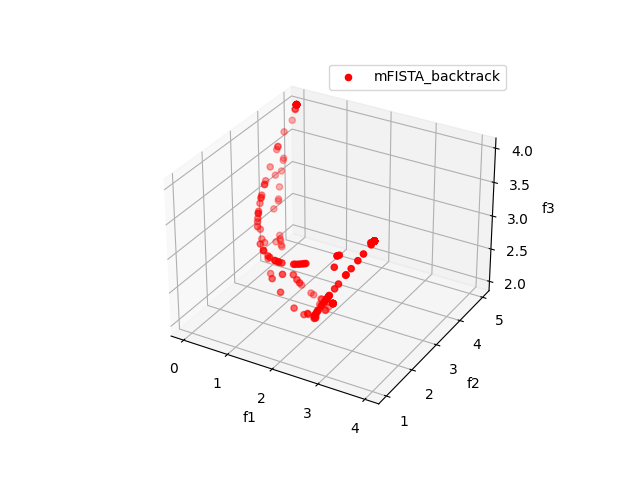} \\
			\includegraphics[scale=0.2]{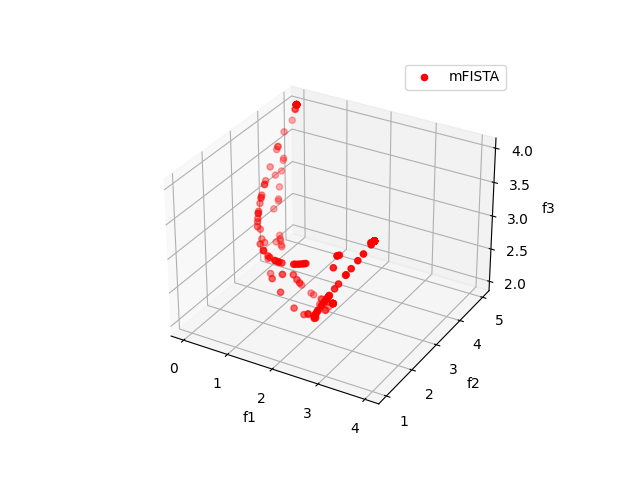}
		\end{minipage}
	}
	\subfigure[MHHM1]
	{
		\begin{minipage}[H]{.3\linewidth}
			\centering
			\includegraphics[scale=0.2]{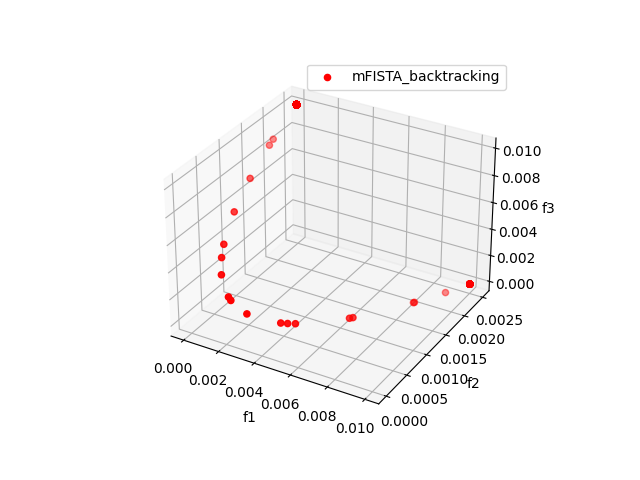} \\
			\includegraphics[scale=0.2]{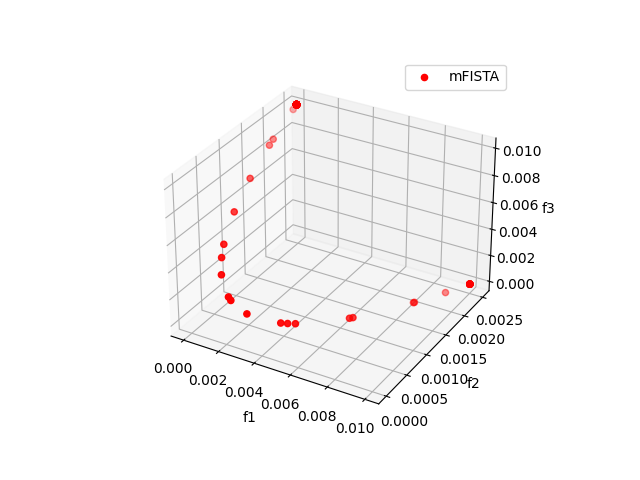}
		\end{minipage}
	}
	\subfigure[MHHM2]
	{
		\begin{minipage}[H]{.3\linewidth}
			\centering
			\includegraphics[scale=0.2]{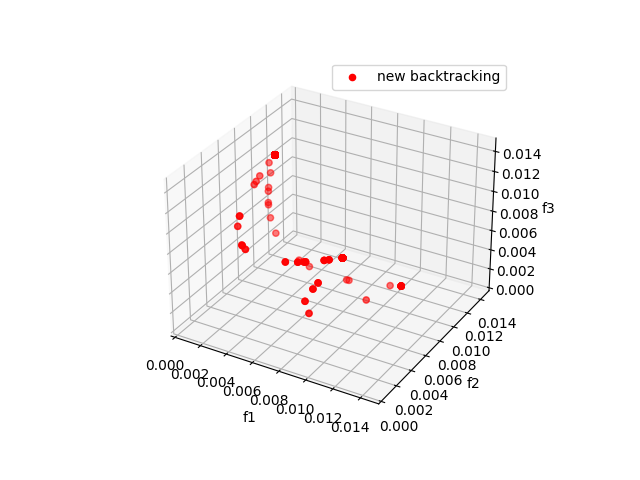} \\
			\includegraphics[scale=0.2]{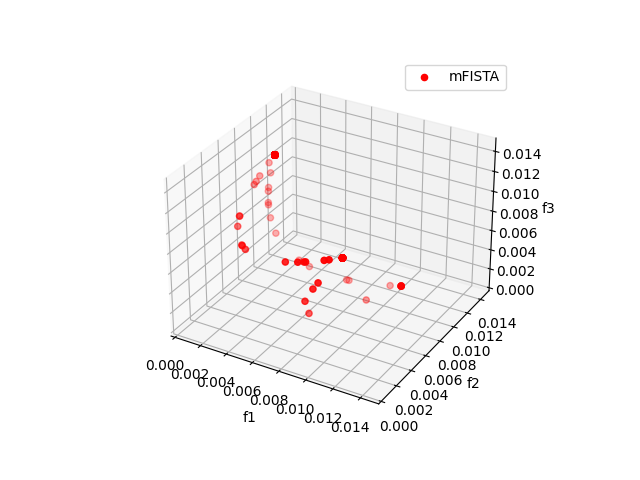}
		\end{minipage}
	}
	
	\caption{Numerical results in variable space obtained by mFISTA\_backtracking (\textbf{Top}) and mFISTA for problems VFM1, MHHM1 and MHHM2.}
	\label{f2}
\end{figure}

\begin{figure}[H]
	\centering 	
	\subfigure[mFISTA]{
		\includegraphics[width=0.45\textwidth]{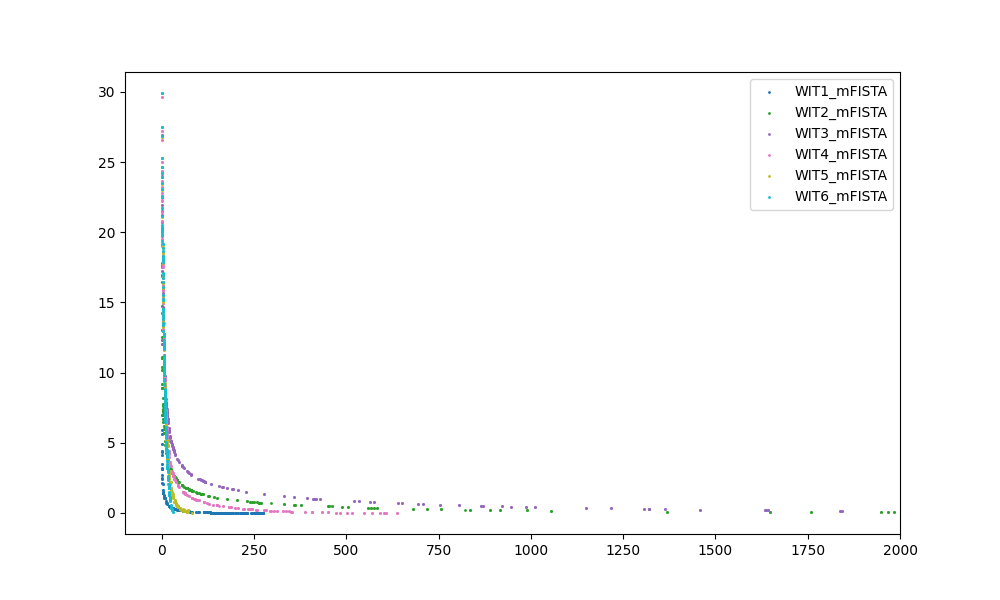}
	}
	\hfill
	\subfigure[mFISTA\_backtracking]{
		\includegraphics[width=0.45\textwidth]{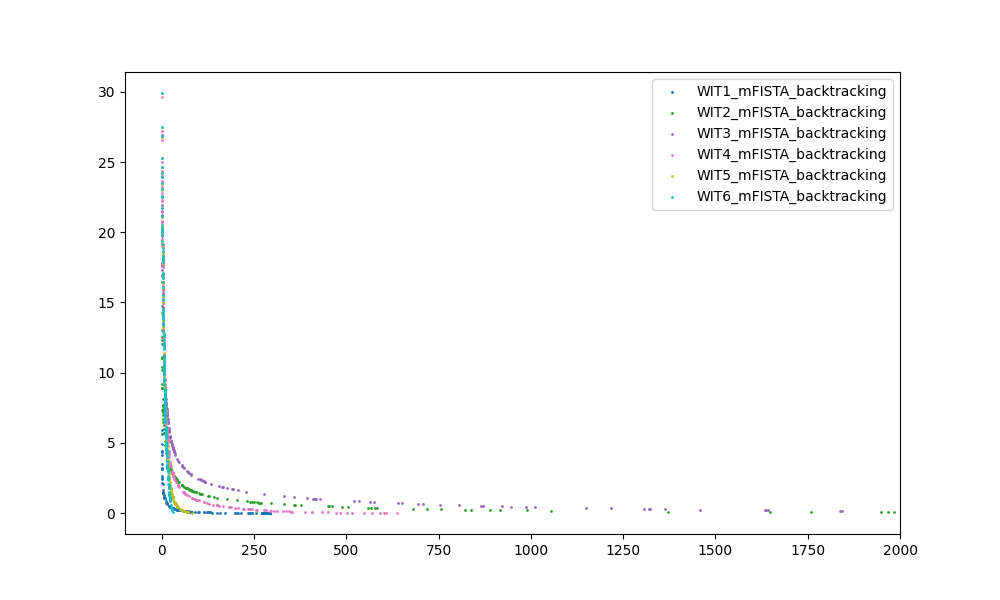}
	}
	
	\centering
	\caption{Numerical results in variable space obtained by mFISTA\_backtracking (\textbf{Left}) and mFISTA for problems WIT1 - WIT6.}
	\label{WIT}
\end{figure}

To assess the effectiveness of the obtained Pareto front, we use purity as a distinguishing metric, as shown in Table \ref{tab1}. In addition, we provide performance metric plots for time, number of iterations, and purity.

\begin{figure}[H]
	\centering 	
	\subfigure[Time]{
		\includegraphics[width=0.3\textwidth]{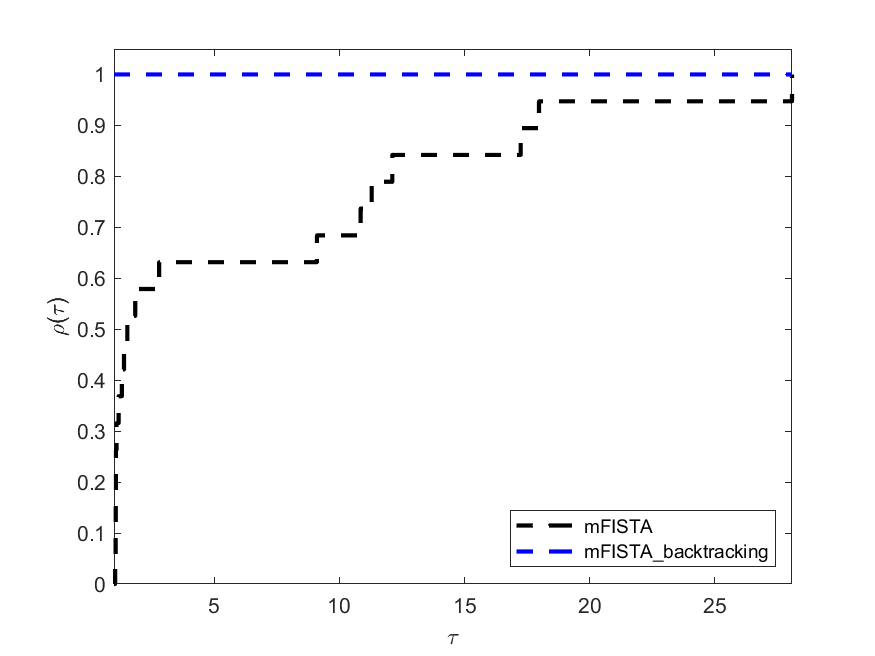}
		\label{time}
	}
	\hfill
	\subfigure[Iterations]{
		\includegraphics[width=0.3\textwidth]{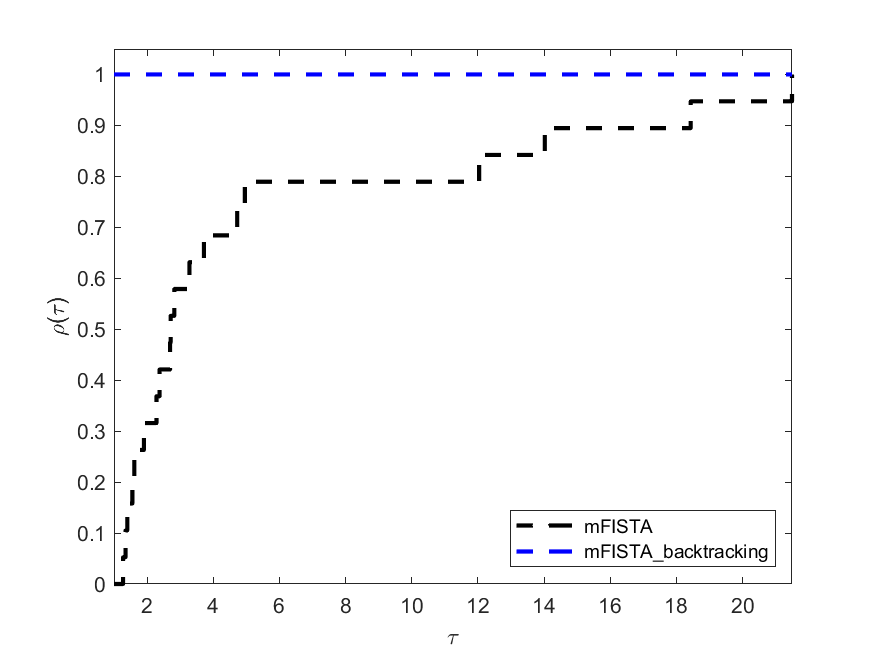}
		\label{iter}
	}
	\hfill
	\subfigure[Purity]{
		\includegraphics[width=0.3\textwidth]{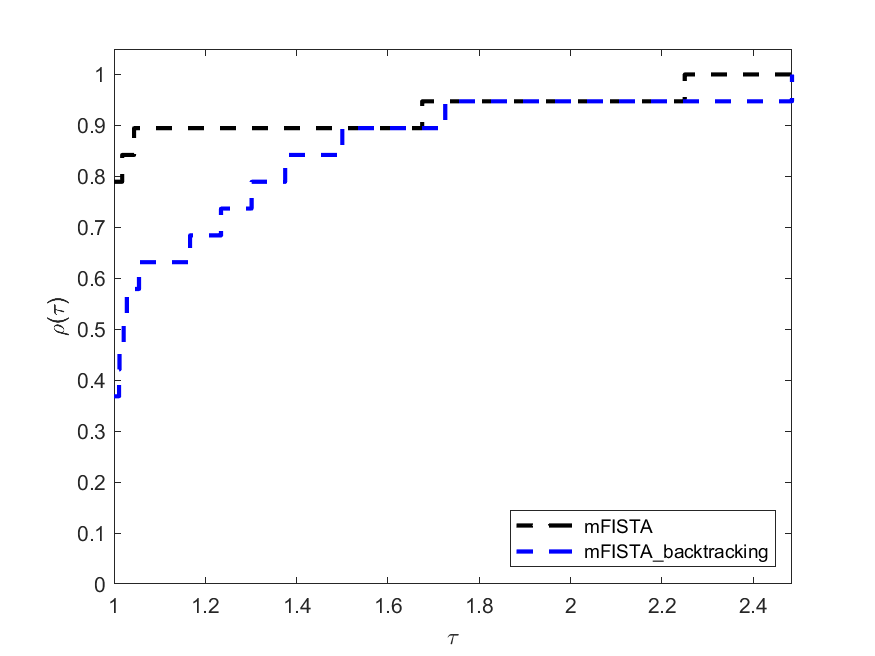}
		\label{purity}
	}
	
	\caption{Performance Metrics.}
	\label{metric}
\end{figure}

From the above results, it can be seen that the \textbf{mFISTA\_backtracking} algorithm does not have an advantage in terms of the number of iterations, as its estimation of the Lipschitz constant leads to more iterations. However, it has an advantage in running time and also outperforms in terms of the purity of the obtained Pareto front. This suggests that the estimation of the Lipschitz constant can promote the algorithm's performance, making it closer to the true Pareto front when Lipschitz constant is unknown compared to the \textbf{mFISTA} algorithm without such an estimation.

\section{Conclusion}\label{Conclusion}
We design a new backtracking technique to estimate the Lipschitz constant $L(f)$ and introduce a new sequence $\{t_k\}$ to address the challenges in the convergence proof of the traditional FISTA algorithm for multiobjective optimization problems. Specifically, we overcome the issue of the missing non-negativity of the auxiliary sequence $W_k$ and the situation where the gradient Lipschitz constant $L(f)$ of the objective function is unknown. By establishing an equality relationship between $L(f) $ and $ t_k $, we avoid the difficulties associated with the non-scalability of the missing non-negativity of $ W_k $. Furthermore, due to the backtracking technique, the sequence $ \{L_k\} $ of the estimates for $ L(f) $ does not require monotonicity, preventing tremendous values of $ L_k $ that would result in small iteration step sizes. In the future, we will further explore the performance of this new strategy in broader application scenarios, such as nonsmooth multiobjective optimization problems.

\backmatter


\begin{thebibliography}{99}
	\bibitem{bonnel2005proximal}
	\newblock H. Bonnel, A. N. Iusem and B. F. Svaiter,
	\newblock Proximal methods in vector optimization,
	\newblock \emph{SIAM Journal on Optimization}, 2005, 15(4): 953-970.
	
	\bibitem{carrizo2016trust}
	\newblock G. A. Carrizo, P. A. Lotito and M. C. Maciel,
	\newblock Trust region globalization strategy for the nonconvex unconstrained multiobjective optimization problem,
	\newblock \emph{Mathematical Programming}, 2016, 159: 339-369.
	
	\bibitem{fliege2000steepest}
	\newblock J. Fliege and B. F. Svaiter,
	\newblock Steepest descent methods for multicriteria optimization,
	\newblock \emph{Mathematical Methods of Operations Research}, 2000, 51: 479-494.
	
	\bibitem{fliege2009newton}
	\newblock J. Fliege, L. M. G. Drummond and B. F. Svaiter,
	\newblock Newton's method for multiobjective optimization,
	\newblock \emph{SIAM Journal on Optimization}, 2009, 20(2): 602-626.
	
	\bibitem{fukuda2013inexact}
	\newblock E. H. Fukuda and L. M. Graca Drummond,
	\newblock Inexact projected gradient method for vector optimization,
	\newblock \emph{Computational Optimization and Applications}, 2013, 54: 473-493.
	
	\bibitem{lucambio2018nonlinear}
	\newblock L. R. Lucambio P$\acute{e}$rez and L. F. Prudente,
	\newblock Nonlinear conjugate gradient methods for vector optimization,
	\newblock \emph{SIAM Journal on Optimization}, 2018, 28(3): 2690-2720.
	
	\bibitem{nesterov1983accelerated}
	\newblock Nesterov Y. A method for solving the convex programming problem with convergence rate $O(1/k^2)$[C]//Dokl akad nauk Sssr. 1983, 269: 543.
	
	\bibitem{beck2009fista}
	\newblock Beck A, Teboulle M. A fast iterative shrinkage-thresholding algorithm for linear inverse problems[J]. SIAM journal on imaging sciences, 2009, 2(1): 183-202.	
	
	\bibitem{nesterov2013introductory}
	\newblock Nesterov Y. Introductory lectures on convex optimization: A basic course[M]. Springer Science \& Business Media, 2013.
	
	\bibitem{nesterov2005smooth}
	\newblock Nesterov Y. Smooth minimization of non-smooth functions[J]. Mathematical programming, 2005, 103: 127-152.
	
	\bibitem{tseng2008acclerated}
	\newblock Tseng P. On accelerated proximal gradient methods for convex-concave optimization[J]. submitted to SIAM Journal on Optimization, 2008, 2(3).
%
	
	
	
	
	\bibitem{tanabe2023accelerated}
	\newblock Tanabe H, Fukuda E H, Yamashita N. An accelerated proximal gradient method for multiobjective optimization[J]. Computational Optimization and Applications, 2023, 86(2): 421-455.
	
	\bibitem{scheiberg2014fast}
	\newblock Scheinberg K, Goldfarb D, Bai X. Fast first-order methods for composite convex optimization with backtracking[J]. Foundations of Computational Mathematics, 2014, 14: 389-417.
	
	\bibitem{bertsekas1999nonlinear}
	\newblock Bertsekas D P. Nonlinear Programming, Athena Scientific, Belmont, Massachusetts[J]. MR3444832, 1999.
	
	\bibitem{rockafellar1997convex}
	\newblock M. Rockafellar,
	\newblock Convex analysis,
	\newblock Princeton University Press, 1997.
	
	\bibitem{tanabe2023new}
	\newblock H. Tanabe, E. H. Fukuda, and N. Yamashita,
	\newblock New merit functions for multiobjective optimization and their properties,
	\newblock \emph{Optimization}, 2023, 1-38.
	
	\bibitem{huband2006rebview}
	\newblock Huband S, Hingston P, Barone L, et al. A review of multiobjective test problems and a scalable test problem toolkit[J]. IEEE Transactions on Evolutionary Computation, 2006, 10(5): 477-506.
	
	\bibitem{dd1998normal}
	\newblock Das I, Dennis J E. Normal-boundary intersection: A new method for generating the Pareto surface in nonlinear multicriteria optimization problems[J]. SIAM journal on optimization, 1998, 8(3): 631-657.
	
	\bibitem{hil2001generalized}
	\newblock Hillermeier C. Generalized homotopy approach to multiobjective optimization[J]. Journal of Optimization Theory and Applications, 2001, 110(3): 557-583.
	
	
	\bibitem{chen2023bb}
	\newblock Chen J, Tang L, Yang X. A Barzilai-Borwein descent method for multiobjective optimization problems[J]. European Journal of Operational Research, 2023, 311(1): 196-209.
	
	\bibitem{jin2001dynamic}
	\newblock Jin Y, Olhofer M, Sendhoff B. Dynamic weighted aggregation for evolutionary multi-objective optimization: Why does it work and how[C]//Proceedings of the genetic and evolutionary computation conference. 2001: 1042-1049.
	
	\bibitem{preuss2006pareto}
	\newblock Preuss M, Naujoks B, Rudolph G. Pareto set and EMOA behavior for simple multimodal multiobjective functions[C]//International Conference on Parallel Problem Solving from Nature. Berlin, Heidelberg: Springer Berlin Heidelberg, 2006: 513-522.
	
	\bibitem{witting2012numerical}
	\newblock Witting K. Numerical algorithms for the treatment of parametric multiobjective optimization problems and applications[D]. , 2012.
	
	
	
	
	
\end{thebibliography}
\end{document}